\numberwithin{equation}{section}
\newtheorem{PROP}{Proposition}[section]
\newtheorem{THM}[PROP]{Theorem}
\newtheorem{LM}[PROP]{Lemma}
\newtheorem{COR}[PROP]{Corollary}
\theoremstyle{definition}
\newtheorem{DEF}[PROP]{Definition}
\newtheorem{REM}[PROP]{Remark}
\newcommand{\powerset}[1]{\mc P(#1)}
\newcommand{\Zobr}[3]{\ensuremath{#1\colon #2\to #3}}
\newcommand{\RR}{\ensuremath{\mathbb R}}
\newcommand{\NN}{\ensuremath{\mathbb N}}
\newcommand{\vp}{\ensuremath{\varphi}}
\newcommand{\ve}{\ensuremath{\varepsilon}}
\newcommand{\sm}{\ensuremath{\setminus}}
\newcommand{\emps}{\ensuremath{\emptyset}}
\newcommand{\limti}[1]{\lim\limits_{#1\to\infty}}
\newcommand{\dcc}[1]{\ensuremath{\lfloor #1 \rfloor}} 
\newcommand{\hcc}[1]{\ensuremath{\lceil #1 \rceil}} 
\newcommand{\limsupti}[1]{\limsup\limits_{#1\to\infty}}
\newcommand{\Ra}{\ensuremath{\Rightarrow}}
\newcommand{\abs}[1]{\lvert#1\rvert}
\newcommand{\absl}[1]{\left\lvert#1\right\rvert}
\newcommand{\norm}[1]{\lVert#1\rVert}
\newcommand{\ol}[1]{\ensuremath{\overline{#1}}}
\newcommand{\ul}[1]{\ensuremath{\underline{#1}}}
\newcommand{\inv}[1]{#1^{-1}}
\newcommand{\intrv}[2]{\ensuremath{[#1,#2]}}
\newcommand{\intrvr}[2]{(#1,#2]}
\newcommand{\intrvl}[2]{\ensuremath{[ #1,#2 )}}
\newcommand{\seqq}[3]{({#1}_{#2})_{#3=1}^\infty}
\newcommand{\seq}[2]{\seqq{#1}{#2}{#2}}
\newcommand{\mc}[1]{\ensuremath{\mathcal{#1}}}
\newcommand{\FF}{\mathcal{F}}
\newcommand{\GG}{\mathcal{G}}
\newcommand{\Pdots}[3]{\ensuremath{#1_{#2}+\dots+#1_{#3}}}
\newcommand{\Ldots}[3]{\ensuremath{#1_{#2},\ldots,#1_{#3}}}
\newcommand{\Ces}{\mathcal{C}}
\newcommand{\Cesex}{\widehat{\mathcal{C}}}
\newcommand{\Te}{T}
\newcommand{\Su}{\Sigma}
\newcommand{\Sui}[2]{\Su_{\intrvr{#1#2}{#2}}}
\newcommand{\thenul}{\theta_0}
\newcommand{\theo}{\theta'}
\newcommand{\thed}{\theta''}
\DeclareMathOperator*{\Flim}{\mathcal{F}-lim}
\DeclareMathOperator*{\Glim}{\mathcal{G}-lim}
\DeclareMathOperator*{\Gklim}{\mathcal{G}_{\mathit{k}}-lim}
\newcommand{\ccohull}{\operatorname{\overline{co}}}
\newcommand{\cco}{\operatorname{\overline{co}}}
\newcommand{\uud}{\ensuremath{\overline{\overline{d}}}}
\newcommand{\ld}{\ensuremath{\underline{d}}}
\newcommand{\lld}{\ensuremath{\underline{\underline{d}}}}
\newcommand{\uda}[1]{\ensuremath{\overline{d_{#1}}}}
\newcommand{\lda}[1]{\ensuremath{\underline{d_{#1}}}}
\newcommand{\udi}{\uda{\infty}}
\newcommand{\ldi}{\lda{\infty}}
\newcommand{\du}{\ensuremath{d^*}}
\newcommand{\dl}{\ensuremath{d_*}}
\newcommand{\udp}{\ensuremath{\overline{d}_P}}
\newcommand{\ldp}{\ensuremath{\underline{d}_P}}
\newcommand{\DD}{\mc{D}}
\newcommand{\DM}{\widehat{\mc{D}}}
\newcommand{\enu}{\renewcommand{\theenumi}{\roman{enumi}}\renewcommand{\labelenumi}{{\rm (\theenumi)}}}
\begin{document}

\author{Peter Letavaj}
\thanks{The first author was supported by Young Researchers' Support Program of Slovak University of Technology in Bratislava.}
\author{Ladislav Mi\v{s}\'\i{k}}
\thanks{The second author was supported by the project P201/12/2351 of GA\v CR.}
\author{Martin Sleziak}
\thanks{The third author was supported by grant VEGA 1/0608/13.}
\keywords{asymptotic density, density measure, finitely additive measure, Ces\`{a}ro mean}
\subjclass[2010]{Primary: 11B05; Secondary: 28A12}

\email{\tt peterletavaj@googlemail.com, ladislav.misik@osu.cz, sleziak@fmph.uniba.sk}

\title{Extreme points of the set of density measures}

\date{\today}
\maketitle

\begin{abstract}
We study finitely additive measures on the set $\mathbb N$ which extend the asymptotic density (density measures). We show that there is a one-to-one correspondence between density measures and positive functionals in $\ell_\infty^*$, which extend Ces\`{a}ro mean. Then we study maximal possible value attained by a density measure for a given set $A$ and the corresponding question for the positive functionals extending Ces\`{a}ro mean.
Using the obtained results, we can find a set of functionals such that their closed convex hull in $\ell_\infty^*$ with weak${}^*$ topology is precisely the set of all positive functionals extending Ces\`{a}ro mean.
This also describes a set of density measures, from which all density measures can be obtained as the closed convex hull.
\end{abstract}

\section{Introduction}

Asymptotic density is a very natural way to measure size of subsets of $\NN$. One of the drawbacks of this concept is that there are sets that do not have asymptotic density. This problem leads us to studying finitely additive measures which extend the asymptotic density and are defined for all subsets of $\NN$. Such measures are called \emph{density measures} and they were studied by several authors, for example \cite{BFPR,MAHARAM,SALTIJD,VANDOUWEN}. Density measures (and other types of densities on $\NN$) have applications, for example, in theory of social choice \cite{FEYMAY,LAUWERS,SUREKHABHARAO}.

If $\mu$ is a density measure and a set $A\subseteq\NN$ has asymptotic density, then clearly $\mu(A)=d(A)$. But for sets not having asymptotic density, it might be interesting to find the maximal and minimal possible values of $\mu(A)$. This problem was posed in \cite{FEYPROBLEM}. Some questions concerning the possible values of density measures were also stated in \cite{VANDOUWEN}. Several expressions of extreme values of density measures are known, see \cite{SLEZZIMDENSRANGE}. In this paper we continue in the study of these extremal values and we find new possibilities how to express them.

Every finitely additive measure on $\NN$ induces a positive continuous linear functional on $\ell_\infty$ and by restricting such functional to characteristic sequences $\chi_A$ we obtain a finitely additive measure. This correspondence between measures and functionals is described in more detail in Section \ref{SECTMEASFUNCT}. Often it is easier to study the corresponding functionals instead of measures. We show that the functionals corresponding to density measures are precisely the positive functionals extending Ces\`{a}ro mean. Thus we are also interested in the extremal values of these functionals.

Another natural question pertaining to the density measures (and the corresponding functionals) is whether we can obtain them in some way from some set of simpler measures (simpler functionals). One possibility how to do this was proposed in \cite{LAUWERS}. Unfortunately, as shown in \cite{SLEZZIMDENSLEVY}, the expression given in \cite[p.49]{LAUWERS} does not entail all density measures.

Using the fact that we know the range of possible values of density measures together with results of \cite{JERISONCONVEX} we obtain some sets of positive functionals which give the set of positive functionals extending Ces\`{a}ro mean as their closed convex hulls in $\ell_\infty^*$ with weak${}^*$-topology. (In the other words, we find a set of functionals such that taking all pointwise limits of their convex combinations yields the set of all positive functionals extending Ces\`{a}ro mean.) This also gives a corresponding results for density measures.

The paper is organized as follows:
Section \ref{SECTPRELIM} contains a version of Krein-Milman theorem which will be needed later.
In Section \ref{SECTMEASFUNCT} we briefly describe the correspondence between the finitely additive measures and the positive functionals from $\ell_\infty^*$. We show that in this way  density measures can be viewed as the positive functionals extending Ces\`{a}ro mean.

In Section \ref{SECTDENSMEAS} we give  four different expressions of the range of density measures. We start by stating the result that these four values are equal to each other, recapitulating which of the equalities are already known. Then we show the remaining equality.

Section \ref{SECTFUNCTIONALS}  deals with analogous problem for functionals. Here we only give two different expressions of maximal possible value of a functional extending Ces\`{a}ro mean. The purpose of this section is to prepare results needed in the last section, where we use them to derive some facts about extreme points of the set of density measures and corresponding functionals.

In Section \ref{SECTEXTPOINTS} we describe a set of functionals such that the closed convex hull of this set is the set of all positive functionals extending the Ces\`{a}ro mean. We give consequences of this fact for the density measures.
To get such set of functionals we can use the expressions of the extreme values obtained in Section \ref{SECTFUNCTIONALS}. It suffices to find some functionals, which attain these values. In this way it is relatively easy to find a set generating all functionals with this property (see Remark \ref{REMFORFREE}). But we then show that the description of the density measures and the corresponding functionals can be further simplified and we show that there is a smaller set of functionals, which have simpler form, and they still generate all positive functionals extending Ces\`{a}ro mean.
From this result we get in Corollary \ref{CORCONVHULL} an analogous result for density measures. This can be considered as the main result of this paper, here we finally get a results about extreme points of the sets of all density measures.

\section{Preliminaries}\label{SECTPRELIM}

By $\NN$ we denote the set $\NN=\{1,2,3,\dots\}$ of all positive integers.

\subsection{Extreme points of subsets of $X^*$}

We will need the following result, which is a special case of \cite[Theorem 1]{JERISONCONVEX} applied to the space $X^*$ with the weak${}^*$ topology. The result from \cite{JERISONCONVEX} can be considered as a version of Krein-Milman theorem.
\begin{PROP}\label{PROPJERISON}
Let $X$ be a linear normed space and $C$ be a subset of $X^*$ which is convex and compact in the weak${}^*$-topology. Let $S\subseteq C$. The following conditions are equivalent:
\begin{enumerate}
\enu
  \item
  \begin{equation*}\label{EQJERISON}
    \sup_{\vp\in S} \vp(x) = \sup_{\vp\in C}\vp(x)
  \end{equation*}
    holds for each $x\in X$;
  \item $C=\cco(S)$, i.e., $C$ is the closed convex hull of $S$;
  \item the closure $\ol S$ of the set $S$ contains all extreme points of $C$.
\end{enumerate}
\end{PROP}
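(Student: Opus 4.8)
The plan is to prove the equivalence of the three conditions by establishing a cycle of implications, treating the classical Krein–Milman theorem and its converse (the Milman theorem) as known tools. Since $C$ is convex and weak${}^*$-compact in $X^*$, and $X^*$ with the weak${}^*$ topology is a locally convex Hausdorff space, the Krein–Milman theorem guarantees that $C$ has extreme points and that $C=\cco(\Ext(C))$, where $\Ext(C)$ denotes the set of extreme points of $C$. This identity will be the workhorse throughout.

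First I would prove $(ii)\Rightarrow(iii)$ using Milman's theorem: if $C=\cco(S)$, then every extreme point of $C$ lies in the closure $\ol S$. This is precisely the statement of the partial converse to Krein–Milman, so the implication is essentially immediate once that theorem is invoked. Next I would prove $(iii)\Rightarrow(ii)$. Assuming $\ol S\supseteq\Ext(C)$, I take closed convex hulls: since $\cco$ is monotone, $\cco(\ol S)\supseteq\cco(\Ext(C))=C$ by Krein–Milman. The reverse inclusion $\cco(\ol S)\subseteq C$ follows because $S\subseteq C$ and $C$ is closed and convex, so $\cco(S)=\cco(\ol S)\subseteq C$. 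Hence $C=\cco(S)$, giving $(ii)$.

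It remains to connect condition $(i)$ with the others; here the natural route is $(i)\Leftrightarrow(ii)$ via the separation theorem. For $(ii)\Rightarrow(i)$: if $C=\cco(S)$, then for any fixed $x\in X$ the linear functional $\vp\mapsto\vp(x)$ is weak${}^*$-continuous on $X^*$, so its supremum over $S$ equals its supremum over $\cco(S)=C$ (a continuous linear functional attains the same supremum on a set and on its closed convex hull). For the converse $(i)\Rightarrow(ii)$, I would argue by contraposition using Hahn–Banach separation in the weak${}^*$ topology. The weak${}^*$-continuous linear functionals on $X^*$ are exactly the evaluations $\vp\mapsto\vp(x)$ for $x\in X$. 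If $\cco(S)\subsetneq C$, pick $\psi\in C\sm\cco(S)$; since $\cco(S)$ is weak${}^*$-closed and convex and $\{\psi\}$ is compact, the separation theorem yields some weak${}^*$-continuous functional, i.e.\ some $x\in X$, with
\begin{equation*}
  \sup_{\vp\in \cco(S)}\vp(x) < \psi(x) \le \sup_{\vp\in C}\vp(x),
\end{equation*}
and since the supremum over $S$ equals the supremum over $\cco(S)$, this contradicts $(i)$.

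The main obstacle I anticipate is the careful handling of the weak${}^*$ topology in the separation step: one must use the specific fact that weak${}^*$-continuous linear functionals on $X^*$ are precisely point evaluations by elements of $X$, so that the separating functional genuinely has the form $\vp\mapsto\vp(x)$ with $x\in X$ rather than being an arbitrary element of $X^{**}$. This is what makes condition $(i)$, phrased purely in terms of $x\in X$, equivalent to the geometric statement $(ii)$. Everything else reduces to standard applications of Krein–Milman, its Milman converse, and monotonicity of the closed-convex-hull operator, so I would keep those parts brief and cite the relevant classical theorems rather than reprove them.
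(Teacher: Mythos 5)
Your argument is correct, but note that the paper does not prove this proposition at all: it is quoted verbatim as a special case of Theorem~1 of the cited reference (Jerison) applied to $X^*$ with the weak${}^*$ topology, so there is no in-paper proof to match. What you supply is the standard self-contained derivation, and all four implications check out: $(ii)\Rightarrow(iii)$ is Milman's partial converse (applicable since $\ol S\subseteq C$ is weak${}^*$-compact), $(iii)\Rightarrow(ii)$ follows from Krein--Milman in the locally convex Hausdorff space $(X^*,\mathrm{weak}^*)$ together with $\cco(S)=\cco(\ol S)\subseteq C$, $(ii)\Rightarrow(i)$ is the fact that a weak${}^*$-continuous linear functional has the same supremum on a set as on its closed convex hull, and $(i)\Rightarrow(ii)$ is Hahn--Banach separation. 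You correctly identify the one genuinely delicate point, namely that the separating functional must be an evaluation $\vp\mapsto\vp(x)$ with $x\in X$ rather than an arbitrary element of $X^{**}$, which is exactly why the weak${}^*$ topology (whose continuous dual is $X$) is the right setting and why condition $(i)$ can be phrased over $x\in X$ only. The only loose end is the degenerate case $S=\emps$ (where $\sup_{\vp\in S}\vp(x)=-\infty$ and all three conditions reduce to $C=\emps$), which is harmless. In short, your proof is a valid replacement for the external citation, at the cost of invoking Krein--Milman and Milman as black boxes.
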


\section{Finitely additive measures and $\ell_\infty^*$}\label{SECTMEASFUNCT}

We say that a function $\Zobr\mu{\powerset{\NN}}{\RR}$ is a \emph{finitely additive measure on $\NN$} if $\mu(A\cup B)=\mu(A)+\mu(B)$ for any disjoint sets $A,B\subseteq\NN$. (Notice that, at this point, we allow the values of $\mu$ to be negative.)

There is a very natural correspondence between the finitely additive measures and linear continuous functionals $\Zobr f{\ell_\infty}{\RR}$. For  $f\in\ell_\infty^*$ we can obtain a measure by putting
$$\mu(A)=f(\chi_A).$$
The process of obtaining a functional from a given measure is similar to definition of Riemann integral. It uses the fact that any bounded sequence can be uniformly approximated by step sequences. (By a \emph{step sequence} we mean a sequence of the form $\sum_{i=1}^n c_i\chi_{A_i}$ for some $\Ldots c1n \in\RR$ and $\Ldots A1n\subseteq\NN$, i.e., a finite linear combination of characteristic sequences.)

More details about this construction can be found, for example, in \cite[Theorem 16.7]{CAROTHERS}, \cite[p.50, Example 1.19]{MORRISON}, \cite[Section 3]{VANDOUWEN}. Many texts in functional analysis provide also a more general version of this result dealing with dual of $\mc L_\infty(X,\mu)$.

It is relatively easy to see that positive functionals correspond to positive finitely additive measures and positive functionals such that $\norm f=1$ correspond to positive finitely additive probability measures.

From now on we will say briefly \emph{measure} instead of finitely additive positive probability measure.

We will study the measures which extend asymptotic density.
\begin{DEF}
For a set $A\subseteq\NN$ the \emph{upper} and \emph{lower asymptotic density} is defined by
$$\ol d(A)=\limsup_{n\to\infty} \frac{A(n)}n \qquad \text{and} \qquad \ul d(A)=\liminf_{n\to\infty} \frac{A(n)}n,$$
where $A(n)=\abs{A\cap\{1,2,\dots,n\}}$.

If $\ul d(A)=\ol d(A)$ then this common value is denoted by $d(A)$ and it is called the \emph{asymptotic density} of the set $A$.

We will denote the class of all subsets of $\NN$ possessing asymptotic density by $\DD$.

A finitely additive measure $\Zobr\mu{\powerset{\NN}}{[0,1]}$ is called a \emph{density measure} if
$$\mu(A)=d(A)$$
for every $A\in\DD$.

The set of all density measures will be denoted by $\DM$.
\end{DEF}

In the other words, density measures are precisely the measures which extend asymptotic density.
Density measures have been studied, for example, in \cite{BFPR,MAHARAM,SALTIJD,VANDOUWEN,SLEZZIMDENSLEVY,SLEZZIMDENSRANGE}.

Sometimes it is easier to work with the corresponding functionals instead of measures. Since we work with density measures, we need to know what class of functionals corresponds to such measures. We will show in Theorem \ref{THMDENSCES} that they are precisely the positive functionals which extend Ces\`{a}ro mean.

\begin{DEF}
Let $x\in\ell_\infty$. If the limit
$$C(x)=\limti n \frac{x_1+\dots+x_n}n$$
exists, then it is called the \emph{Ces\`{a}ro  mean} of the sequence $x$. We will denote the set of all bounded sequences that have Ces\`{a}ro mean by $\Ces$.

The set of all positive
functionals
$f\in\ell_\infty^*$ which \emph{extend Ces\`{a}ro mean,} i.e., which fulfill
$$f|_{\Ces}=C$$
will be denoted by $\Cesex$.
\end{DEF}

In the following proof we will need the notion of L\'evy group $\mc G$. The L\'evy group $\mc G$ consists of all permutations $\Zobr\pi{\NN}{\NN}$ such that
$$\limti n \frac{\abs{\{k; k \le n < \pi(k)\}}}n=0.$$
Several equivalent characterizations and some other facts about $\mc G$ can be found in \cite{BLUMLINGER}, \cite{BLUMOBA} or \cite{SLEZZIMDENSLEVY}.

\begin{THM}\label{THMDENSCES}
Let $\mu$ be a measure and $f\in\ell_\infty^*$ be the corresponding functional. The measure $\mu$  is a density measure if and only if $f$ extends Ces\`{a}ro mean.
\end{THM}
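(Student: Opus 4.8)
The plan is to prove the two implications separately; the first is routine and the Lévy group $\GG$ enters only in the second. For the direction ``$f$ extends Cesàro mean $\Rightarrow$ $\mu$ is a density measure,'' let $A\in\DD$. Since $\frac1n\sum_{k=1}^n(\chi_A)_k=\frac{A(n)}n\to d(A)$, the sequence $\chi_A$ lies in $\Ces$ with $C(\chi_A)=d(A)$; hence $\mu(A)=f(\chi_A)=C(\chi_A)=d(A)$, so $\mu\in\DM$. This uses only the definitions.

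The substance is the converse. Assuming $\mu\in\DM$, I must show $f(x)=C(x)$ for every $x\in\Ces$, and one cannot argue by linearity and density alone: $\Ces$ is strictly larger than the sup-norm closed linear span of $\{\chi_A:A\in\DD\}$, since a Cesàro-summable sequence may have level sets with no asymptotic density (e.g.\ $\chi_P-\chi_N$ for disjoint $P,N\notin\DD$ whose counting functions stay balanced). The idea is to bridge this gap with the Lévy group through two facts.

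First, every density measure is $\GG$-invariant, i.e.\ $f(x\circ\pi)=f(x)$ for all $x\in\ell_\infty$ and $\pi\in\GG$. The elementary seed is that, by definition of $\GG$, $\abs{\{k\le n:\pi(k)>n\}}=o(n)$, whence $\abs{\pi(\{1,\dots,n\})\triangle\{1,\dots,n\}}=o(n)$ and therefore $\frac1n\sum_{k=1}^n\bigl(x_{\pi(k)}-x_k\bigr)\to0$; equivalently $x\circ\pi-x\in\Ces$ with Cesàro mean $0$. Converting this into invariance of $\mu$ is precisely where the hypothesis $\mu|_{\DD}=d$ is consumed (by sandwiching $\pi(A)$ and $A$ between sets of $\DD$), and I would invoke the analysis of density measures and the Lévy group from \cite{SLEZZIMDENSLEVY}. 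Second, for $x\in\Ces$ with $C(x)=L$ the constant sequence $L\cdot\mathbf 1$ lies in the sup-norm closed convex hull $\cco\{x\circ\pi:\pi\in\GG\}$ of the Lévy orbit: averaging suitably many rearrangements of $x$ replaces, coordinatewise, the value of $x$ by its mean over long initial segments, which tends to $L$. Granting these, the conclusion is immediate, since $f$ is continuous and linear: by the first fact $f$ is constant, with value $f(x)$, on the orbit $\{x\circ\pi\}$, hence on its closed convex hull; by the second fact that hull contains $L\cdot\mathbf 1$, so $f(x)=f(L\cdot\mathbf 1)=L\,f(\mathbf 1)=L=C(x)$.

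I expect the flattening statement (second fact) to be the main obstacle: it is the genuinely combinatorial step, asserting that a Cesàro-summable sequence can be driven uniformly close to its mean by convex combinations of rearrangements that disturb initial-segment counts only by $o(n)$. The invariance (first fact) is subtler than it looks, because it must be extracted from $\mu|_{\DD}=d$ \emph{without} already knowing that $f$ extends $C$ (that knowledge would make invariance a one-line consequence of $x\circ\pi-x\in\Ces$ having Cesàro mean $0$, but here it is exactly what we are trying to prove).
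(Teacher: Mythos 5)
Your easy direction and your route through $\GG$-invariance coincide with the paper's: the paper likewise quotes \cite{SLEZZIMDENSLEVY} for ``density measures are precisely the $\GG$-invariant measures'' and then upgrades invariance from characteristic sequences to all of $\ell_\infty$ by linearity and the sup-norm density of step sequences (a step you should make explicit, since you apply invariance of $f$ to an arbitrary $x\in\Ces$, not just to sequences $\chi_A$). The divergence, and the genuine gap, is in how the argument is closed. The paper finishes in one line by citing \cite[Theorem 2]{BLUMOBA}: a positive functional with $\norm f=1$ extends the Ces\`aro mean if and only if it is $\GG$-invariant. You instead propose to derive ``$\GG$-invariant $\Rightarrow$ extends $C$'' from the flattening claim that $C(x)\cdot\mathbf 1$ lies in the norm-closed convex hull $\cco\{x\circ\pi;\ \pi\in\GG\}$ for every $x\in\Ces$, and you explicitly leave that claim unproved. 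That claim is not a technicality: it immediately implies the hard direction of the Bl\"umlinger--Obata theorem and is its natural dual reformulation, so what you have labeled ``the main obstacle'' is exactly the content the paper imports as a citation. Your proposal has a hole precisely there.

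Moreover, the heuristic you offer for the flattening step does not obviously work. Ces\`aro summability controls averages over initial segments $[1,n]$ only, while permutations in $\GG$ may disturb initial-segment counts by no more than $o(n)$; averaging over within-block rearrangements with blocks of bounded (or too slowly growing) length already fails for $x=\chi_A$ with $A$ of density $\tfrac12$ consisting of long runs, since the resulting block averages oscillate between values near $0$ and near $1$. Making the flattening precise requires windows of the form $\intrvr{\theta n}{n}$ with $\theta\to1^-$ and a P\'olya-type estimate --- in effect the machinery of Sections 4 and 5 of the paper, or the actual proof in \cite{BLUMOBA}. Either supply a proof of the flattening lemma or replace it by the citation; as written, the converse implication is not established.
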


\begin{proof}
It is obvious that if $f$ extends Ces\`{a}ro mean then $\mu$ is a density measure, since we have $d(A)=C(\chi_A)$ whenever $A\in\DD$.

To prove the opposite direction we need
a characterization of $\DM$ and $\Cesex$
using invariance under
permutations from $\mc G$.

It is shown in \cite[Theorem 2.6]{SLEZZIMDENSLEVY} that density measures are precisely the $\mc G$-invariant measures.
By \cite[Theorem 2]{BLUMOBA} a positive functional with $\norm f=1$ extends Ces\`{a}ro mean if and only if it is $\mc G$-invariant.

So if $\mu$ is a density measure, then the corresponding functional is positive and $\norm f=1$. Since $\mu$ is $\mc G$-invariant, we also get that $f$ is $\mc G$-invariant for step sequences, simply because $$f(\pi(\chi_A))=f(\chi_{\inv\pi(A)})=\mu(\inv\pi(A))=\mu(A)=f(\chi_A)$$
and the same is true for any finite linear combination $\sum\limits_{i=1}^n \chi_{A_i}$ by linearity.

Using the fact that step sequences are dense in $\ell_\infty$ we can extend the $\mc G$-invariance to all sequences.
It suffices to notice that if we have step sequences $s_n=\sum\limits_{i\in F_n} \chi_{A_i}$ (where each $F_n$ is finite) and $s_n\to x$, then we have, for any $\pi\in\mc G$,
$$f(\pi(s_n))=f(s_n)$$
which implies $f(\pi(x))=f(x)$.
\end{proof}

\section{Extremal values of density measures}\label{SECTDENSMEAS}

For a given subset $A\subseteq\NN$, we are interested in the values
$$\lld(A)=\inf\{\mu(A); \mu\in\DM\} \qquad\text{and}\qquad \uud(A)=\sup\{\mu(A); \mu\in\DM\}.$$
In this section we will mention several equivalent expressions of $\uud(A)$ and $\lld(A)$.

For any $A\subseteq\NN$ and $\alpha\in\RR$ we define
$$A_\alpha(n)=\sum_{\substack{k\in A\\k\le n}}k^\alpha.$$
Notice that $A_0(n)=A(n)$.

It is relatively easy to see that $\NN_\alpha(n)\sim\frac{n^{\alpha+1}}{\alpha+1}$ for $\alpha>-1$. This observation is often useful when working with functions similar to $\udi(A)$, $\ldi(A)$ defined below.

In addition to $\uud(A)$ and $\lld(A)$ we will also use the following quantities.
\begin{align*}
\du(A)&=\inf\{d(B); B\supseteq A; B\in\DD\}\\
\udi(A)&=\limti\alpha \limsup_{n\to\infty} \frac{A_\alpha(n)}{\NN_\alpha(n)}\\
\udp(A)&=\lim_{\theta\to1^-} \limsup_{n\to\infty}  \frac{A(n)-A(\theta n)}{n-\theta n}
\end{align*}

We should show that the limits in the definitions of $\udi(A)$ and $\udp(A)$ really exist.
The existence of the limit appearing in the definition of $\udp(A)$ is shown in \cite{POLYA}.
For the existence of the limit used in the definition of $\udi(A)$, see Remark \ref{REMUDIEXISTS}.

We also define the lower versions of the last three densities $\dl(A)=\sup\{d(B); B\subseteq A; B\in\DD\}$, $\ldi(A)=\limti\alpha \liminf\limits_{n\to\infty} \frac{A_\alpha(n)}{\NN_\alpha(n)}$ and $\ldp(A)=\lim\limits_{\theta\to1^-} \liminf\limits_{n\to\infty}  \frac{A(n)-A(\theta n)}{n-\theta n}$.

There is an obvious correspondence between the lower and upper densities, $\uud(\NN\sm A)=1-\lld(A)$ and the analogous equalities are true for
$\du(A)$, $\udi(A)$ and $\udp(A)$.

The values $\udp(A)$ and $\ldp(A)$ were studied in a slightly more general setting in \cite{POLYA}. They are sometimes called (upper and lower) P\'olya density, see e.g. \cite{GREDENSSURV}.

The densities $\udi(A)$ and $\ldi(A)$ are related to $\alpha$-densities, which were studied
in \cite{FUCHSGA,GAGM}. The upper $\alpha$-density is defined as
$$\uda{\alpha} (A) = \limsup_{n\to\infty} \frac{A_\alpha(n)}{\NN_\alpha(n)},$$
the definition of $\lda{\alpha}(A)$ is analogous.

We will need the following facts, which follow from \cite[Theorem 3, Corollary 1]{RAJAGOPAL}.
\begin{THM}\label{THMRAJNEW}
Let
\begin{enumerate}
\enu
  \item If $-1\le\alpha\le\beta$ and then
    $$\lda{\beta}(A) \leq \lda{\alpha}(A) \leq \uda{\alpha}(A) \leq \uda{\beta}(A)$$
  \item If $\alpha>0$ and $A\in \DD$, then
    $$\uda{\alpha}(A) = d(A).$$
\end{enumerate}
\end{THM}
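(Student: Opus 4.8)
The plan is to derive both parts from a single summation-by-parts (Abel/Riemann--Stieltjes) identity combined with the asymptotics $\NN_\alpha(n)\sim n^{\alpha+1}/(\alpha+1)$ valid for $\alpha>-1$ (and $\NN_{-1}(n)\sim\log n$ at the endpoint). The middle inequality $\lda{\alpha}(A)\le\uda{\alpha}(A)$ in (i) is immediate since $\liminf\le\limsup$, so the real content is the two outer inequalities and part (ii).

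For the outer inequalities, fix $-1\le\alpha<\beta$ and write $A_\alpha(n)=\sum_{k\in A,\,k\le n}k^{\alpha-\beta}\cdot k^{\beta}=\int_{1^-}^{n}t^{\alpha-\beta}\,dA_\beta(t)$, regarding $A_\beta$ as a step function. Integrating by parts yields
$$A_\alpha(n)=n^{\alpha-\beta}A_\beta(n)+(\beta-\alpha)\int_1^n A_\beta(t)\,t^{\alpha-\beta-1}\,dt.$$
Writing $A_\beta(t)=\psi(t)\NN_\beta(t)$ with $\psi(t)=A_\beta(t)/\NN_\beta(t)\in[0,1]$ and inserting $\NN_\beta(t)\sim t^{\beta+1}/(\beta+1)$, a short computation gives, after dividing by $\NN_\alpha(n)$,
$$\frac{A_\alpha(n)}{\NN_\alpha(n)}=\frac{\alpha+1}{\beta+1}\,\psi(n)+\frac{(\beta-\alpha)(\alpha+1)}{(\beta+1)\,n^{\alpha+1}}\int_1^n\psi(t)\,t^{\alpha}\,dt+o(1).$$
The decisive observation is that the two coefficients are nonnegative and carry total mass $\frac{\alpha+1}{\beta+1}+\frac{\beta-\alpha}{\beta+1}=1$, so $A_\alpha(n)/\NN_\alpha(n)$ is asymptotically a weighted average of the values $\psi(t)=A_\beta(t)/\NN_\beta(t)$ for $t\le n$. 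Since $0\le\psi\le1$ and the weight carried by any fixed initial segment $[1,T]$ is $O(T^{\alpha+1}/n^{\alpha+1})\to0$, passing to $\limsup$ gives $\uda{\alpha}(A)\le\uda{\beta}(A)$, and passing to $\liminf$ gives $\lda{\beta}(A)\le\lda{\alpha}(A)$.

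Part (ii) is proved by the same device, with $A_\beta$ replaced by the plain counting function $A=A_0$. For $\alpha>0$ one writes $A_\alpha(n)=\int_{1^-}^n t^{\alpha}\,dA(t)$ and integrates by parts to obtain
$$\frac{A_\alpha(n)}{\NN_\alpha(n)}=(\alpha+1)\,\frac{A(n)}{n}-\frac{\alpha(\alpha+1)}{n^{\alpha+1}}\int_1^n\frac{A(t)}{t}\,t^{\alpha}\,dt+o(1).$$
If $A\in\DD$ then $A(t)/t\to d(A)$, whence both terms converge and the limit equals $(\alpha+1)d(A)-\alpha\,d(A)=d(A)$; in particular $\uda{\alpha}(A)=\lda{\alpha}(A)=d(A)$.

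I expect the main difficulty to be bookkeeping rather than conceptual: one must check that the errors produced by replacing $\NN_\gamma(t)$ by $t^{\gamma+1}/(\gamma+1)$ inside the integrals are genuinely $o(\NN_\alpha(n))$, and one must verify the ``weight escapes to infinity'' property that legitimizes interchanging the average with $\limsup$ (this is where $\alpha>-1$, i.e.\ $\alpha+1>0$, is used). The only genuinely separate case is the endpoint $\alpha=-1$, corresponding to logarithmic density, where $\NN_{-1}(n)\sim\log n$ replaces the power asymptotic; the same integration by parts then exhibits $A_{-1}(n)/\NN_{-1}(n)$ as $\frac{1}{\log n}\int_1^n\psi(t)\,t^{-1}\,dt+o(1)$, an average with the same escape property, so the argument carries over verbatim.
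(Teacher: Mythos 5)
Your proposal is correct, but it is not comparable to a proof in the paper for the simple reason that the paper gives none: Theorem \ref{THMRAJNEW} is imported wholesale as a citation of Rajagopal (``which follow from [Theorem 3, Corollary 1]''), so you have supplied a self-contained argument where the authors only supply a reference. Your argument is the classical one for weighted (Riesz) means and it goes through: the Abel summation identity is right, the two weights $\frac{\alpha+1}{\beta+1}$ and $\frac{\beta-\alpha}{\beta+1}$ are nonnegative exactly because $-1\le\alpha\le\beta$, and the ``mass escapes to infinity'' property ($O(T^{\alpha+1}/n^{\alpha+1})$ on a fixed initial segment) is what makes the representation a regular averaging method, so $\limsup$ and $\liminf$ of the average are trapped between $\lda{\beta}(A)$ and $\uda{\beta}(A)$; the error from replacing $\NN_\gamma(t)$ by $t^{\gamma+1}/(\gamma+1)$ contributes $O(n^{\alpha}+n^{\alpha-\beta}+\log n)=o(n^{\alpha+1})$ since $\alpha+1>0$ and $\beta>-1$, and the logarithmic endpoint $\alpha=-1$ is correctly handled separately. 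One point worth making explicit, which you implicitly get right: part (ii) does \emph{not} follow from part (i) applied with $0\le\alpha$ (that only yields $\lda{\alpha}(A)\le d(A)\le\uda{\alpha}(A)$, the wrong direction), so the separate integration-by-parts computation showing $A_\alpha(n)/\NN_\alpha(n)\to(\alpha+1)d(A)-\alpha d(A)=d(A)$ for $A\in\DD$ is genuinely needed, and yours is valid because both terms converge individually. What your approach buys is independence from the external reference; what it costs is the bookkeeping you flag, all of which checks out.
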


\begin{REM}\label{REMUDIEXISTS}
Note that from Theorem \ref{THMRAJNEW} we immediately get that the limit
$$\udi(A)=\limti\alpha \uda\alpha(A)$$
exists.
\end{REM}

Now we can formulate the main result of this section, in which we obtain several expressions for the extremal values of density measures.
\begin{THM}\label{THM4DENS}
For any $A\subseteq\NN$ we have
$$\uud (A)= \du(A)=\udi(A)=\udp(A)$$
and $\lld(A)=\dl(A)=\ldi(A)=\ldp(A).$
\end{THM}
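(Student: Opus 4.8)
The plan is first to reduce everything to the \emph{upper} statement $\uud(A)=\du(A)=\udi(A)=\udp(A)$. The companion identities for the lower quantities follow automatically by applying the complementation relations recorded above, namely $\uud(\NN\sm A)=1-\lld(A)$ and the analogous ones for $\du,\udi,\udp$, to the set $\NN\sm A$. For the upper statement I would take as granted (from \cite{SLEZZIMDENSRANGE}) the chain $\uud(A)=\du(A)=\udp(A)$, so that only the value $\udi(A)$ has to be inserted into it; equivalently, it suffices to prove $\udi(A)=\du(A)$. Moreover, all three of $\uud$, $\udp$, $\udi$ are trivially at most $\du$: if $A\subseteq B$ and $B\in\DD$, then a density measure is monotone and extends $d$, every window density of $B$ equals $d(B)$, and $A_\alpha(n)\le B_\alpha(n)$; hence each of the three values is $\le d(B)$, and taking the infimum over such $B$ gives $\le\du(A)$. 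Thus the entire difficulty is concentrated in a single \emph{lower} bound, $\du(A)\le\udi(A)$.

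For the easy half $\udi(A)\le\du(A)$ I would fix $B\supseteq A$ with $B\in\DD$. For every $\alpha$ we have $A_\alpha(n)\le B_\alpha(n)$, hence $\uda{\alpha}(A)\le\uda{\alpha}(B)$, and for $\alpha>0$ the second part of Theorem~\ref{THMRAJNEW} gives $\uda{\alpha}(B)=d(B)$. Therefore $\uda{\alpha}(A)\le d(B)$ for all $\alpha>0$, and letting $\alpha\to\infty$ (the limit exists by Remark~\ref{REMUDIEXISTS}) yields $\udi(A)\le d(B)$; taking the infimum over all admissible $B$ gives $\udi(A)\le\du(A)$.

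The core is the reverse inequality $\du(A)\le\udi(A)$, equivalently $\udp(A)\le\udi(A)$. The guiding idea is that the weighted ratio $A_\alpha(n)/\NN_\alpha(n)$, after summation by parts and using $\NN_\alpha(n)\sim n^{\alpha+1}/(\alpha+1)$, can be rewritten as a genuine weighted average of the window densities $\frac{A(n)-A(tn)}{(1-t)n}$ over $t\in(0,1)$ against a Beta-type kernel whose mass concentrates at $t=1$ as $\alpha\to\infty$; thus both $\udi$ and $\udp$ measure the density of $A$ in arbitrarily thin top windows and should coincide. To make $\du(A)\le\udi(A)$ concrete I would set $u=\udi(A)$ and construct a superset $B\supseteq A$ with $B\in\DD$ and $d(B)\le u+\ve$: since $\udi(A)=u$ controls all the weighted upper densities $\uda{\alpha}(A)$, and hence the thin top-window densities of $A$, the sparse stretches of $A$ can be filled up to uniform density $u$ without the bursts of $A$ forcing the counting function above $un+o(n)$, so that the filled set has asymptotic density $u$.

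The main obstacle is exactly the interchange of the two limiting processes $n\to\infty$ and $\alpha\to\infty$ (respectively $\theta\to1^-$). Writing $h_\theta(A)=\limsup_{n\to\infty}\frac{A(n)-A(\theta n)}{n-\theta n}$, a naive comparison using a single window $\intrvr{\theta n}{n}$ and the crude bound $k^\alpha\ge(\theta n)^\alpha$ loses a multiplicative constant: optimising over $\alpha$ for fixed $\theta$ yields only $\udp(A)\le e\cdot\udi(A)$, because inside one window the weight $k^\alpha$ varies by a factor $\theta^{-\alpha}$. Removing this spurious factor is the crux, and I expect it to require either the exact averaging identity above, so that no single-window rounding is ever performed, together with the existence of the limit $\lim_{\theta\to1^-}h_\theta(A)$ (this is \cite{POLYA}), or else careful bookkeeping in the superset construction that keeps the added points compatible with a genuine asymptotic density. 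The remaining ingredients — bounding the contribution of small $k$ in the tail and the elementary asymptotics of $\NN_\alpha(n)$ — are routine.
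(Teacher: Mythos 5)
Your reductions are correct and agree with the paper's: complementation handles the lower identities, the chain $\uud(A)=\du(A)=\udp(A)$ is quoted from the literature exactly as the paper does, and your argument for the easy inequality $\udi(A)\le\du(A)$ (monotonicity in $A$ together with $\uda{\alpha}(B)=d(B)$ for $B\in\DD$, $\alpha>0$) is fine. But the proposal stops precisely where the actual work begins. For the remaining inequality $\du(A)\le\udi(A)$ you only describe the obstacle --- the naive single-window comparison loses a factor of $e$ because $k^\alpha$ varies by $\theta^{-\alpha}$ across one window --- and then say you \emph{expect} it to be overcome either by an exact Beta-kernel averaging identity or by a superset construction, without carrying out either. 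That is a genuine gap, not a routine omission. Moreover, the superset plan as stated is essentially a restatement of the goal: producing $B\supseteq A$, $B\in\DD$, with $d(B)\le\udi(A)+\ve$ \emph{is} the inequality $\du(A)\le\udi(A)$, and the justification you offer for why the ``bursts'' of $A$ do not push $B(n)$ above $un+o(n)$ is exactly the unresolved interchange of $n\to\infty$ with $\alpha\to\infty$.

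The paper closes this gap (in the dual, lower form $\ldi(A)\le\lld(A)$, Lemma \ref{LMDODATOK}) by a two-step argument that sidesteps the factor-$e$ problem rather than confronting it. First, a structural reduction: by Lemma \ref{LMSLEZZIM}(ii) there is $B\subseteq A$, $B\in\DD$, with $d(B)=\lld(A)$, and the additivity statements \ref{LMSLEZZIM}(iii)--(iv) give $\lld(A)=d(B)+\lld(A\sm B)$ and $\ldi(A)=d(B)+\ldi(A\sm B)$, so one may assume $\lld(A)=0$. Second, in the zero case the crude window estimate yields $\lda{\alpha}(A)\le\theta^{\alpha+1}+2\ve(1-\theta)(\alpha+1)$, and the two limit parameters are coupled by $(1-\theta)(\alpha+1)=1/\sqrt{\ve}$, giving $\lda{\infty}(A)\le e^{-1/\sqrt{\ve}}+2\sqrt{\ve}\to0$ as $\ve\to0^+$. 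The multiplicative loss you worried about is harmless when the target value is $0$, and the reduction arranges exactly that. This reduction-to-the-zero-case step, plus the explicit coupling of $\theta$ and $\alpha$, is the missing idea your write-up would need.
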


Of course, it suffices to show this only for the upper densities or only for lower densities, because of the aforementioned correspondence.

In fact, with the exception of the part about $\udi(A)$ and $\ldi(A)$ the above theorem is only a summarization of known results.
The equality $\uud(A)=\du(A)$ was shown in \cite[Theorem 3]{SLEZZIMDENSRANGE}.  The equality $\uud(A)=\udp(A)$ can be obtained from (more general) result \cite[Satz VIII]{POLYA}.

The missing equality between $\udi(A)$ and the remaining expressions will be shown in Lemma \ref{LMDODATOK}. This answers \cite[Problem 1]{SLEZZIMDENSRANGE}.

Let us start by summarizing some results on $\udi(A)$ and $\uud(A)$ which will be needed in the proof.

\begin{LM}\label{LMSLEZZIM}
Let $A,B\subseteq\NN$.
\begin{enumerate}
\enu
  \item
        $$0 \le \lld(A) \leq \ldi(A) \leq \udi(A) \leq \uud(A)\le1$$
  \item For any $B$ there exists $A\subseteq B$ such that $A\in\DD$ and $d(A)=\lld(B)$.
  \item If $A\cap B=\emps$ and $A\in\DD$, then $$\lld(A\cup B)=d(A)+\lld(B).$$
  \item If $A\cap B=\emps$ and $A\in\DD$ then $$\ldi(A\cup B)=d(A)+\ldi(B).$$
\end{enumerate}
\end{LM}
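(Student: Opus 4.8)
The goal is to prove the four-part Lemma \ref{LMSLEZZIM}, which packages together the basic order relations and additivity properties connecting $\lld$, $\ldi$, $\udi$, $\uud$. Let me sketch each part.

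For part (i), the chain $0\le\lld(A)\le\udi(A)\le\uud(A)\le1$ splits into three distinct comparisons. The outer bounds $0\le\lld(A)$ and $\uud(A)\le1$ are immediate since every $\mu\in\DM$ takes values in $[0,1]$. The middle inequality $\ldi(A)\le\udi(A)$ follows directly from Theorem \ref{THMRAJNEW}(i), which gives $\lda{\alpha}(A)\le\uda{\alpha}(A)$ for each $\alpha$; taking $\limti\alpha$ of both sides preserves the inequality. The genuinely substantive comparisons are $\lld(A)\le\ldi(A)$ and $\udi(A)\le\uud(A)$, and by the $\NN\sm A$ correspondence it suffices to establish one of them, say $\udi(A)\le\uud(A)$.

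For $\udi(A)\le\uud(A)$, the plan is to fix $\alpha>0$ and construct, for a fixed $\alpha$, a density measure $\mu$ whose value on $A$ is at least $\uda\alpha(A)$, or at least come arbitrarily close. The natural candidate is the functional language: one uses the $\alpha$-weighted Cesàro-type averaging $x\mapsto\limsup_n \frac{\sum_{k\le n}k^\alpha x_k}{\NN_\alpha(n)}$ and invokes a Banach-limit / Hahn--Banach extension to produce a positive functional dominating this sublinear functional, which by Theorem \ref{THMDENSCES} corresponds to a density measure (one must check the functional extends Cesàro mean, which should follow from Theorem \ref{THMRAJNEW}(ii) guaranteeing that on $\DD$ these $\alpha$-densities agree with $d$). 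Evaluating on $\chi_A$ gives a density measure with $\mu(A)\ge\uda\alpha(A)$, hence $\uud(A)\ge\uda\alpha(A)$ for all $\alpha>0$; taking $\limti\alpha$ yields $\uud(A)\ge\udi(A)$. I expect this construction to be the main obstacle, since one must verify the extension lands in $\DM$ rather than merely being a positive functional. (Alternatively, since Theorem \ref{THM4DENS} already records $\uud(A)=\du(A)=\udp(A)$ as known, one might instead route through one of those identities; but as Lemma \ref{LMSLEZZIM} is a preparatory step \emph{toward} proving the $\udi$ equality, I would keep the argument self-contained and only use the $\alpha$-density machinery.)

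For part (ii), I would use the characterization $\lld(A)=\inf\{\mu(A):\mu\in\DM\}$ together with part (i) and the known identity $\lld(A)=\dl(A)=\sup\{d(B):B\subseteq A,\ B\in\DD\}$ from Theorem \ref{THM4DENS}. The supremum defining $\dl(A)$ is in fact attained: one takes a sequence $B_k\subseteq A$ in $\DD$ with $d(B_k)\to\dl(A)$ and, by a diagonal/interleaving argument, builds a single $B\subseteq A$ with $B\in\DD$ and $d(B)=\dl(A)=\lld(A)$. The point here is that densities of subsets of $A$ form a set whose supremum is achieved by an explicit "greedy" construction taking initial segments of each $B_k$ on longer and longer blocks.

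Parts (iii) and (iv) are additivity statements under the hypothesis that $A\in\DD$ is disjoint from $B$, and they have parallel proofs for the two density-measure-type quantities. For (iii): since $A\in\DD$, every $\mu\in\DM$ satisfies $\mu(A)=d(A)$, so finite additivity gives $\mu(A\cup B)=d(A)+\mu(B)$; taking the infimum over $\mu\in\DM$ yields $\lld(A\cup B)=d(A)+\lld(B)$ directly. For (iv), the analogous statement for $\ldi$, I would argue at the level of the $\alpha$-densities: because $A\in\DD$ Theorem \ref{THMRAJNEW}(ii) gives $\lda\alpha(A)=d(A)$ for $\alpha>0$, and for disjoint sets $A_\alpha(n)+B_\alpha(n)=(A\cup B)_\alpha(n)$, so the liminf of $(A\cup B)_\alpha(n)/\NN_\alpha(n)$ splits as $d(A)+\lda\alpha(B)$ (the $A$-part contributing a genuine limit $d(A)$ and hence passing through the liminf additively); taking $\limti\alpha$ then gives $\ldi(A\cup B)=d(A)+\ldi(B)$. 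The only care needed is the standard fact that when one summand converges, liminf is additive, which handles the splitting cleanly.
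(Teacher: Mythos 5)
Your proposal is correct in substance, but it does considerably more work than the paper, which proves only part (iv) and cites parts (i)--(iii) verbatim from \cite{SLEZZIMDENSRANGE} (Corollary~6, Proposition~2 and Proposition~1 there). Your argument for (iv) coincides with the paper's: use Theorem~\ref{THMRAJNEW} to get $\lda{\alpha}(A)=d(A)$ for $A\in\DD$, split the $\liminf$ additively because one summand converges, and let $\alpha\to\infty$. Your proof of (iii) (every $\mu\in\DM$ satisfies $\mu(A)=d(A)$, so $\mu(A\cup B)=d(A)+\mu(B)$ and one takes the infimum) and your block-interleaving construction for (ii) are both sound replacements for the citations, and using the previously known identity $\lld=\dl$ in (ii) is not circular, since the new content of Theorem~\ref{THM4DENS} is only the $\udi$/$\ldi$ equality.

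The one soft spot is the key inequality $\udi(A)\le\uud(A)$ in part (i). The Hahn--Banach scheme is right: dominate a suitable linear functional by the sublinear $p_\alpha(x)=\limsup_n \NN_\alpha(n)^{-1}\sum_{k\le n}k^\alpha x_k$ with $f(\chi_A)=\uda{\alpha}(A)$, and positivity is automatic. But the step \uv{$f$ extends Ces\`aro mean} does not follow from Theorem~\ref{THMRAJNEW}(ii) as stated, which only asserts $\uda{\alpha}(A)=d(A)$ for \emph{sets} $A\in\DD$; since $p_\alpha$ is not linear, you cannot pass from characteristic sequences to all of $\Ces$ by linearity and density of step sequences. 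What you actually need is that the weighted-mean method with weights $k^\alpha$ is consistent with the Ces\`aro mean on all of $\Ces$, i.e.\ $p_\alpha(x)=-p_\alpha(-x)=C(x)$ for $x\in\Ces$. This is true and standard (e.g.\ by Abel summation, or by reducing $x\in\intrv01^{\NN}$ to a characteristic sequence via the rounding trick of Lemma~\ref{LMSLAVO2}, noting $\sum_{k\le n}k^\alpha(x_k-\tilde x_k)=O(n^\alpha)=o(\NN_\alpha(n))$), but it is a genuine missing step, not a consequence of the theorem you invoke. Also note that in (iv) one needs $\lda{\alpha}(A)=d(A)$, whereas Theorem~\ref{THMRAJNEW}(ii) literally gives only $\uda{\alpha}(A)=d(A)$; this is repaired by applying it to $\NN\sm A$ as well (the paper glosses over the same point).
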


\begin{proof}
The part (i) is shown in \cite[Corollary 6]{SLEZZIMDENSRANGE}. The second part is \cite[Proposition 2]{SLEZZIMDENSRANGE} and the third part is
\cite[Proposition 1]{SLEZZIMDENSRANGE}.

(iv) From the existence of the limit $d(A)=\limti n\frac{A(n)}{n}=\limti n \frac{\sum_{i=1}^n \chi_A(n)}n$ we get, for any $\alpha>0$,
$d_\alpha(A)=\limti n \frac{A_\alpha(n)}{\NN_\alpha(n)} =d(A)$ from Theorem \ref{THMRAJNEW}. Therefore
$$\lda{\alpha} (A\cup B) = \liminf_{n\to\infty} \frac{A_\alpha(n)+B_\alpha(n)}{\NN_\alpha(n)} =
\limti n \frac{A_\alpha(n)}{\NN_\alpha(n)} + \liminf_{n\to\infty} \frac{B_\alpha(n)}{\NN_\alpha(n)} = d(A) + \lda{\alpha}(B).$$
Taking limits $\alpha\to\infty$ on both sides gives $\ldi(A\cup B)=d(A)+\ldi(B)$.
\end{proof}

\begin{LM}\label{LMDODATOK}
For any set $A\subseteq\NN$ we have
$$\lld(A)=\ld_\infty(A).$$
\end{LM}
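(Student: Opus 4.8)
The goal is to prove $\lld(A)=\ldi(A)$, which together with Lemma~\ref{LMSLEZZIM}(i) and the already-known equalities in Theorem~\ref{THM4DENS} completes the chain of identities. By part~(i) of Lemma~\ref{LMSLEZZIM} we already have $\lld(A)\le\ldi(A)$ for free, so the entire content of the lemma is the reverse inequality
\begin{equation*}
  \ldi(A)\le\lld(A).
\end{equation*}
The plan is to produce, for the given set $A$, a subset $C\subseteq A$ with $C\in\DD$ and $d(C)\ge\ldi(A)$; then Lemma~\ref{LMSLEZZIM}(ii)--(iii) (or the definition of $\dl$ together with the already-established $\lld=\dl$) immediately yields $\lld(A)\ge d(C)\ge\ldi(A)$. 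In other words, I want to extract from the $\alpha$-density information a genuine density-$d$ subset of $A$ whose density is at least $\ldi(A)$.

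First I would unwind the definition: $\ldi(A)=\limti\alpha\lda\alpha(A)$, and by Theorem~\ref{THMRAJNEW}(i) the quantities $\lda\alpha(A)$ increase with $\alpha$, so $\ldi(A)=\sup_{\alpha}\lda\alpha(A)$. Fixing $\ve>0$, I can therefore choose a single large $\alpha$ with $\lda\alpha(A)>\ldi(A)-\ve$, which says
\begin{equation*}
  \liminf_{n\to\infty}\frac{A_\alpha(n)}{\NN_\alpha(n)}>\ldi(A)-\ve .
\end{equation*}
The strategy is then to exploit the decomposition $A=\bigcup_j A\cap(I_j)$ along a rapidly growing sequence of blocks $I_j=\intrvr{n_{j-1}}{n_j}$ and to use Lemma~\ref{LMSLEZZIM}(iv), which tells me how $\ldi$ behaves under adding a density-zero-complemented piece, to peel off controlled portions of $A$. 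The key mechanism is that the weight $k^\alpha$ concentrates mass near the top of each interval, so a lower bound on the weighted density $\lda\alpha$ forces the ordinary counting function $A(n)/n$ to be large on a cofinal set of scales; the task is to organize these scales into an honest subset $C$ of asymptotic density at least $\ldi(A)-\ve$.

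The main obstacle I anticipate is precisely this passage from the weighted (P\'olya/$\alpha$) quantity back to an unweighted density-$d$ subset: a lower bound on $\liminf A_\alpha(n)/\NN_\alpha(n)$ does not by itself hand over a subset of $A$ in $\DD$, since $A_\alpha$ mixes contributions from all scales below $n$ with polynomial weights. I expect to handle this by a block/diagonal construction, choosing $n_j\to\infty$ fast enough that $\NN_\alpha(n_{j-1})/\NN_\alpha(n_j)\to0$ (which holds as soon as $n_{j-1}/n_j\to0$, using $\NN_\alpha(n)\sim n^{\alpha+1}/(\alpha+1)$), so that on each block the weighted average is governed essentially by the top block alone; within each block I select all of $A$, and across blocks the density stabilizes to the desired value. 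One then lets $\ve\to0$, using that $d(C)\ge\ldi(A)-\ve$ for every $\ve$ produces a sequence of subsets whose densities approach $\ldi(A)$, and concludes via $\lld(A)=\dl(A)=\sup\{d(C):C\subseteq A,\ C\in\DD\}\ge\ldi(A)$. The interaction with Theorem~\ref{THMRAJNEW} and the asymptotics $\NN_\alpha(n)\sim n^{\alpha+1}/(\alpha+1)$ should make each individual estimate routine once the block sizes are correctly calibrated; getting that calibration right, uniformly enough to pass to the limit in $\alpha$, is where the real work lies.
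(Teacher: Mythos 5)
Your proposal has a genuine gap, and it starts from a false premise. You claim that by Theorem~\ref{THMRAJNEW}(i) the quantities $\lda{\alpha}(A)$ \emph{increase} with $\alpha$, so that $\ldi(A)=\sup_\alpha\lda{\alpha}(A)$ and you may fix one large $\alpha$ with $\lda{\alpha}(A)>\ldi(A)-\ve$. The monotonicity goes the other way: for $\alpha\le\beta$ the theorem gives $\lda{\beta}(A)\le\lda{\alpha}(A)$, so the lower $\alpha$-densities are non-increasing and $\ldi(A)=\inf_\alpha\lda{\alpha}(A)$. Consequently the inequality $\lda{\alpha}(A)>\ldi(A)-\ve$ holds trivially for \emph{every} $\alpha$ and carries no information, and your reduction to a single fixed $\alpha$ collapses. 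Worse, the extraction step you build on it is false: a lower bound on $\lda{\alpha}(A)$ for one fixed $\alpha$ cannot yield a subset $C\subseteq A$, $C\in\DD$, with $d(C)\ge\lda{\alpha}(A)-\ve$. Already for $\alpha=0$ this would assert $\dl(A)\ge\ld(A)$, which fails for the standard examples (cited in the paper) where $\lld(A)=\dl(A)<\ld(A)$. The only usable input is the \emph{uniform} lower bound over all $\alpha$, i.e.\ the infimum itself, and your sketch never engages with that. Finally, even setting the monotonicity aside, the block construction that is supposed to produce $C$ is exactly the step you defer (``where the real work lies''), so the proposal does not contain a proof of the one inequality $\ldi(A)\le\lld(A)$ that the lemma is about.

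For comparison, the paper argues in the opposite direction: it bounds $\ldi(A)$ \emph{above} by $\lld(A)$ rather than manufacturing dense subsets of $A$. Using Lemma~\ref{LMSLEZZIM}(ii)--(iv) it first peels off a set $B\subseteq A$, $B\in\DD$, with $d(B)=\lld(A)$, reducing everything to the case $\lld(A)=0$. In that case the known identity $\lld(A)=\ldp(A)$ (P\'olya) says the local densities on $(\theta n,n]$ drop below $2\ve$ for infinitely many $n$, and a direct estimate of $A_\alpha(n)$ — splitting at $\theta n$ and coupling the parameters by $(1-\theta)(\alpha+1)=1/\sqrt{\ve}$ — forces $\lda{\infty}(A)\le e^{-1/\sqrt{\ve}}+2\sqrt{\ve}\to0$. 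If you want to salvage your plan, you would essentially have to reprove this coupling of $\theta$ and $\alpha$ in the opposite direction, which is not routine.
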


\begin{proof}
We already know $0 \le \lld(A) \leq \ldi(A) $ from Lemma \ref{LMSLEZZIM}, so it suffices to show the inequality $\ldi(A)\le\lld(A)$ for all $A\subseteq\NN$.

First let us assume that $\lld(A)=0$. We want to show that $\lda\infty(A)=0$.

We have
$$\lim_{\theta \to 1^-} \liminf_{n \to \infty} \frac{A(n) - A(\theta n)}{n - \theta n}=0.$$
Fix $\ve>0$. Then there exists $\theta_0<1$ such that
$$\liminf_{n \to \infty} \frac{A(n) - A(\theta n)}{n - \theta n}<\ve$$
whenever $\theta_0<\theta<1$. This implies that
\begin{equation}\label{EQTHETA}
  \frac{A(n) - A(\theta n)}{n - \theta n} < 2\ve
\end{equation}
for infinitely many $n$'s.

If $n$ fulfills (\ref{EQTHETA}) then
\begin{gather*}
A_\alpha(n) \leq A_\alpha(\theta n) + n^\alpha (A(n) - A(\theta n)) < A_\alpha(\theta n) +
n^{\alpha+1}(1-\theta) 2 \ve\\
\frac{A_\alpha(n)}{n^{\alpha+1}} \leq \frac{A_\alpha(\theta n)}{(\theta
n)^{\alpha+1}}\cdot\theta^{\alpha+1} +
(1-\theta)2\ve\\
\frac{A_\alpha(n)}{n^{\alpha+1}} \leq \frac{\NN_\alpha(\theta n)(\alpha+1)}{(\theta n)^{\alpha+1}} \cdot \frac{\theta^{\alpha+1}}{\alpha+1} + (1-\theta)2\ve
\end{gather*}

Since the above inequality holds for infinitely many $n$'s we get
\begin{equation}\label{INEQTHETA}
\lda{\alpha}(A) = \liminf_{n\to\infty} \frac{A_\alpha(n)(\alpha+1)}{n^{\alpha+1}} \leq \theta^{\alpha+1} +
(1-\theta)2\ve(\alpha+1)
\end{equation}
for any $\theta\in(\theta_0,1)$ and any $\alpha>0$.

Let us assume that, moreover,
\begin{equation}\label{EQTHETAALPHA}
    (1-\theta)(\alpha+1)=\frac1{\sqrt{\ve}}.
\end{equation}

Using \eqref{EQTHETAALPHA} we get from \eqref{INEQTHETA}
\begin{equation}\label{EQALPEPS}
    \lda{\alpha}(A) \leq \left(1-\frac1{(\alpha+1)\sqrt\ve}\right)^{\alpha+1} + 2\sqrt\ve.
\end{equation}
This inequality is valid for any $\alpha$ and
$\theta\in(\theta_0,1)$ that fulfill \eqref{EQTHETAALPHA}.

Now, if $\alpha\to\infty$ (note that this means $\theta\to1^-$,
hence we can always find $\theta\in(\theta_0,1)$ such that
\eqref{EQTHETAALPHA} holds), we get
\begin{equation}\label{EQINFEPS}
    \lda{\infty}(A) \leq e^{-\frac1{\sqrt\ve}} + 2\sqrt\ve.
\end{equation}
As the RHS tends to $0$ for $\ve\to0^+$ and $\ve>0$ can be
chosen arbitrarily, we finally get
$$\lda{\infty}(A)=0.$$

If we combine the above with the inequality $0 \le \lld(A) \leq \ldi(A)$, we have so far proved $\lld(A)=\ld_\infty(A)$ for the case that some of these values is zero.

By Lemma \ref{LMSLEZZIM}(ii) we know that
$\lld(A)=d(B)$
for some $B\subseteq A$, $B\in\DD$. For this set we have $\lld(A\sm B)=0$. Using other parts of Lemma \ref{LMSLEZZIM} we get that
$$\lld(A)=\lld(B\cup A\sm B) = d(B)+\lld(A\sm B)= d(B)+\ldi(A\sm B)=\ldi(A).$$
\end{proof}

From Theorem \ref{THM4DENS} and Theorem \ref{THMRAJNEW} we immediately have $$\lld(A) \le \ul d(A) \le \ol d(A) \le \uud(A).$$
Let us mention that these inequalities may be strict, as shown, for example, in  \cite[Exaple 3.4]{SLEZZIMDENSLEVY}, \cite[Sectoin 2]{SLEZZIMDENSRANGE}.

\section{Extremal values of positive functionals extending Ces\`{a}ro mean}\label{SECTFUNCTIONALS}

Next we turn to study the set $\Cesex$ of all positive functionals extending Ces\`{a}ro mean. From Theorem \ref{THMDENSCES} we already know that they correspond to density measures. Sometimes working with functionals instead of measures can be more convenient.

Again we want to find out the maximal and minimal possible value of such functionals for a given bounded sequence $x$.
We will study two functions which naturally correspond to $\uud$ and $\udp$. (It would be possible to define also analogous of the remaining densities from Theorem \ref{THM4DENS}.)

These results will be used in the following section to prove Corollary \ref{CORCONVHULL} describing a set containing all extreme points of the set of all density measures.

We will use the following sublinear functions defined on $\ell_\infty$
\begin{align*}
f_C(x)&=\sup\{f(x); f\in\Cesex\}\\
t(x)&=\lim_{\theta\to1^-} \limsup_{n\to\infty} \frac{\sum x_i\chi_{(\theta n,n]}(i)}{n(1-\theta)}
\end{align*}

The existence of the limit used in the definition of $t(x)$ is shown in Remark \ref{REMTEXISTS}.

\begin{REM}\label{REMDENSFUN}
Note that for $A\subseteq\NN$ we have $f_C(\chi_A)=\uud(A)$, $t(\chi_A)=\udp(A)$.
\end{REM}

We will introduce some notation which will be useful when proving some facts about the function $t(x)$.

Let us denote
\begin{align*}
\Te_{\theta,n}(x)&=\frac{\sum x_i\chi_{(\theta n,n]}(i)}{n(1-\theta)}\\
\Te_\theta(x)&=\limsup_{n\to\infty} \Te_{\theta,n}(x)
\end{align*}
When the sequence $x$ will be clear from the context, we will just write $\Te_{\theta,n}$ and $\Te_\theta$ instead.

This means that $t(x)=\lim\limits_{\theta\to1^-} \Te_\theta(x)$.

Now we can formulate and prove the following lemma, which will be useful in the proof of Proposition \ref{PROPTX}.

\begin{LM}\label{LMSLAVO2}
Let $x\in\intrv01^{\NN}$. Then there exists $\tilde x\in\{0,1\}^{\NN}$ such that $\tilde x-x\in\Ces$ and
\begin{gather*}
C(\tilde x-x)=0\\
f_C(x)=f_C(\tilde x)\\
\Te_{\theta}(x)=\Te_{\theta}(\tilde x) \text{ and } t(x)=t(\tilde x)
\end{gather*}
\end{LM}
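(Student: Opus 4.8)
The plan is to obtain $\tilde x$ by a \emph{greedy rounding} of the partial sums of $x$, arranged so that the partial sums of $\tilde x$ never drift more than $1$ away from those of $x$; each of the three claims then reduces to this one boundedness estimate. Writing $s_n=\sum_{i=1}^n x_i$ (and $s_0=0$), I would set
$$\tilde x_n=\dcc{s_n}-\dcc{s_{n-1}}.$$
First I would check that $\tilde x\in\{0,1\}^{\NN}$: since the increment $s_n-s_{n-1}=x_n$ lies in $[0,1]$, monotonicity of the floor gives $\dcc{s_{n-1}}\le\dcc{s_n}\le\dcc{s_{n-1}}+1$, so indeed $\tilde x_n\in\{0,1\}$. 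Putting $\tilde s_n=\sum_{i=1}^n\tilde x_i=\dcc{s_n}$, this yields the single estimate on which everything rests,
$$\abs{\tilde s_n-s_n}=s_n-\dcc{s_n}<1\qquad(n\in\NN).$$

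From here the three assertions follow quickly. For $\tilde x-x\in\Ces$ and $C(\tilde x-x)=0$, note that $\sum_{i=1}^n(\tilde x_i-x_i)=\tilde s_n-s_n$ is bounded, so its Ces\`{a}ro average tends to $0$. For $f_C(x)=f_C(\tilde x)$, I would use that every $f\in\Cesex$ agrees with $C$ on $\Ces$; hence $f(\tilde x)=f(x)+f(\tilde x-x)=f(x)+C(\tilde x-x)=f(x)$ for all $f\in\Cesex$, and taking suprema gives the equality of $f_C$. For the P\'olya-type averages, the numerator of $\Te_{\theta,n}$ is a difference of two partial sums, $\sum_i x_i\chi_{(\theta n,n]}(i)=s_n-s_m$ with $m=\dcc{\theta n}$, and likewise $\sum_i\tilde x_i\chi_{(\theta n,n]}(i)=\tilde s_n-\tilde s_m$; thus the two numerators differ by $\abs{(\tilde s_n-s_n)-(\tilde s_m-s_m)}<2$, whence
$$\abs{\Te_{\theta,n}(\tilde x)-\Te_{\theta,n}(x)}<\frac{2}{n(1-\theta)},$$
which tends to $0$ as $n\to\infty$ for each fixed $\theta<1$. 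Taking $\limsupti n$ gives $\Te_\theta(\tilde x)=\Te_\theta(x)$, and then letting $\theta\to1^-$ yields $t(\tilde x)=t(x)$.

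This argument is essentially bookkeeping, so I do not expect a genuine obstacle; the two places that need care are the verification that the rounded increments land in $\{0,1\}$ (which uses $x_n\le1$ in an essential way) and the correct identification of the summation range $\{i:\theta n<i\le n\}$ with $\{m+1,\dots,n\}$ for $m=\dcc{\theta n}$, so that both numerators are genuinely telescoping differences of $s$ and of $\tilde s$ and the bound $\abs{\tilde s_n-s_n}<1$ can be applied at both endpoints.
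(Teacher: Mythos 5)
Your proposal is correct and is essentially the paper's own proof: the paper defines $\tilde x$ by the identical condition $\sum_{i=1}^n\tilde x_i=\dcc{\sum_{i=1}^n x_i}$ and deduces all three claims from the same bound $\abs{\tilde s_n-s_n}<1$ on partial sums (your explicit verification that $\tilde x_n\in\{0,1\}$ is a small detail the paper leaves implicit). No differences worth noting.
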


\begin{proof}
Define $\tilde x_n$ by
$$\Pdots{\tilde x}1n = \dcc{\Pdots x1n}.$$
Then we have
$$\Pdots{\tilde x}1n \le \Pdots x1n \le \Pdots{\tilde x}1n+1,$$
which implies $C(\tilde x-x)=0$. From this we immediately get $f_C(x)=f_C(\tilde x)$.

We also have
$$\sum x_i\chi_{(\theta n,n]}(i) = \sum_{i=1}^n x_i - \sum_{i=1}^{\dcc{\theta n}} x_i,$$
which implies $\abs{\sum x_i\chi_{(\theta n,n]}(i) - \sum \tilde x_i\chi_{(\theta n,n]}(i)}\le 2$ and, consequently,
$\Te_{\theta}(x)=\Te_{\theta}(\tilde x)$ and $t(x)=t(\tilde x)$.
\end{proof}

\subsection{Basic properties of $f_C(x)$}

Let us mention some basic properties of the function $f_C(X)$. Proofs of these facts are easy, so they will be omitted.

\begin{LM}\label{LMFC}
\begin{align*}
(\forall x,y\in\ell_\infty)\,\,& f_C(x+y)\le f_C(x)+f_C(y)\\
(\forall x\in\ell_\infty)(\forall c>0)\,\,& f_C(cx)=cf_C(x)\\
(\forall x\in\ell_\infty)(\forall y\in\Ces)\,\,& f_C(x+y)=f_C(x)+C(y)\\
(\forall x\in\Ces)\,\,& f_C(x)=C(x)\\
(\forall x,y\in\ell_\infty)\,\,& x-y\in\Ces \land C(x-y)=0 \Ra f_C(x)=f_C(y)\\
(\forall x,y)\,\,& x\le y \Ra f_C(x)\le f_C(y)
\end{align*}
\end{LM}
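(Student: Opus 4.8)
The plan is to read off every line directly from the defining formula $f_C(x)=\sup\{f(x);\,f\in\Cesex\}$, using only three features shared by all members $f$ of $\Cesex$: each such $f$ is linear, each is positive, and each satisfies $f|_{\Ces}=C$. The organizing principle throughout is the elementary behaviour of suprema under translation by a constant and under multiplication by a positive scalar, namely $\sup_f(f(x)+c)=\bigl(\sup_f f(x)\bigr)+c$ and $\sup_f cf(x)=c\,\sup_f f(x)$ for $c>0$. So I would first record these two sup-algebra identities and then apply them mechanically.

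For the first two lines, subadditivity follows since for every $f\in\Cesex$ we have $f(x+y)=f(x)+f(y)\le f_C(x)+f_C(y)$ by linearity, and taking the supremum over $f$ preserves the bound; positive homogeneity follows from $f(cx)=cf(x)$ together with the scalar rule above. For the third line, I fix $y\in\Ces$; then $f(y)=C(y)$ for every $f\in\Cesex$, so $f(x+y)=f(x)+C(y)$ where $C(y)$ is a constant independent of $f$, and the translation rule yields $f_C(x+y)=f_C(x)+C(y)$. The fourth line is then the special case $x=0$ of the third, since $f_C(0)=0$; equivalently, for $x\in\Ces$ every admissible $f$ returns the single value $C(x)$, so the supremum equals $C(x)$. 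The fifth line is obtained by writing $x=y+(x-y)$ with $x-y\in\Ces$ and applying the third line: $f_C(x)=f_C(y)+C(x-y)=f_C(y)$. Finally, monotonicity uses positivity: if $x\le y$ then $y-x\ge0$, so $f(y)\ge f(x)$ for every $f\in\Cesex$, and passing to the supremum gives $f_C(x)\le f_C(y)$.

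Since the computations are all one-liners, the points deserving care are structural rather than technical. I would first confirm that $\Cesex\neq\emps$, so that the supremum is taken over a nonempty set; this is guaranteed by the Hahn--Banach extension of the Cesàro mean, and since every $f\in\Cesex$ has norm one, the value $f_C(x)$ is finite for each $x\in\ell_\infty$. The one step where a genuine hypothesis is used, rather than pure formalism, is the third line: pulling $C(y)$ out of the supremum is legitimate precisely because the extension property $f|_{\Ces}=C$ makes $f(y)$ the same number for all $f\in\Cesex$. This is the main obstacle, such as it is, and once the identity is in hand, lines four and five are immediate corollaries, so I would make it the natural first target.
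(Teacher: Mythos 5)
Your argument is correct and complete; the paper itself omits the proof of Lemma \ref{LMFC}, stating only that the facts are easy, and your derivation (reading each line off the definition $f_C(x)=\sup\{f(x);\,f\in\Cesex\}$ via linearity, positivity, and the extension property $f|_{\Ces}=C$) is exactly the standard argument the authors intend. Your additional care about $\Cesex\neq\emps$ (via Hahn--Banach applied to the sublinear majorant $\limsup_n\frac{x_1+\dots+x_n}{n}$) and the finiteness of the supremum (since $\norm{f}=f(\mathbf 1)=1$ for every positive $f$ extending $C$) is a welcome point that the paper leaves implicit.
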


\subsection{Function $t(x)$}

\begin{REM}\label{REMTEXISTS}
From \cite{POLYA} we know that the limit in the definition of $t(x)$ exists for sequences from $\{0,1\}^{\NN}$.
Using Lemma \ref{LMSLAVO2} we get the existence of this limit for any sequences in $\intrv01^{\NN}$.

It is relatively easy to show that $\Te_\theta(cx)=c\Te_\theta(x)$ for $c>0$ and $\Te_\theta(x+\ol c)=c+\Te_\theta(x)$ for any constant $c$. Once we know this, we get the existence of this limit for all bounded sequences.

In fact, the proof from \cite{POLYA} could be modified so that it works for any bounded sequence.
\end{REM}

Now we can prove the main result of this section.

\begin{PROP}\label{PROPTX}
For any $x\in\ell_\infty$ we have
$$t(x)=f_C(x).$$
\end{PROP}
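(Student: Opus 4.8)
The plan is to prove $t(x) = f_C(x)$ by establishing the two inequalities separately, reducing first to sequences in $\intrv01^{\NN}$ and then exploiting the correspondence with set densities recorded in Remark \ref{REMDENSFUN}.

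First I would reduce to the case $x \in \intrv01^{\NN}$. Both $t$ and $f_C$ are translation-compatible with Ces\`aro mean and positively homogeneous: by Lemma \ref{LMFC} we have $f_C(cx+\ol d) = cf_C(x)+d$ for $c>0$, and by Remark \ref{REMTEXISTS} the analogous relations $\Te_\theta(cx) = c\Te_\theta(x)$ and $\Te_\theta(x+\ol c) = c+\Te_\theta(x)$ hold, hence $t(cx+\ol d) = ct(x)+d$. Therefore an arbitrary bounded $x$ can be affinely rescaled into $\intrv01^{\NN}$ without affecting the validity of the claimed equality, so it suffices to treat $x \in \intrv01^{\NN}$.

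Next, for $x \in \intrv01^{\NN}$, I would invoke Lemma \ref{LMSLAVO2} to pass to a genuine characteristic sequence: there is $\tilde x \in \{0,1\}^{\NN}$, say $\tilde x = \chi_A$ for some $A \subseteq \NN$, with $f_C(x) = f_C(\tilde x)$ and $t(x) = t(\tilde x)$. Now Remark \ref{REMDENSFUN} identifies $f_C(\chi_A) = \uud(A)$ and $t(\chi_A) = \udp(A)$, and Theorem \ref{THM4DENS} gives precisely $\uud(A) = \udp(A)$. Chaining these equalities yields
$$t(x) = t(\chi_A) = \udp(A) = \uud(A) = f_C(\chi_A) = f_C(x),$$
which is the desired conclusion. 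In this route the substantive content is entirely imported: the nontrivial equality $\uud = \udp$ among the four density expressions is already supplied by Theorem \ref{THM4DENS}.

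The main obstacle, and the only place demanding genuine verification, is confirming that the reductions are legitimate, namely that the affine-invariance identities for $t$ really follow from the $\Te_\theta$-identities quoted in Remark \ref{REMTEXISTS} and hold uniformly enough to survive the limit $\theta \to 1^-$, and that the affine change of variables maps $\ell_\infty$ onto $\intrv01^{\NN}$ with the correct constants. I expect these to be routine. The one alternative worry is whether the authors intend a self-contained functional-analytic proof bypassing the set-density results; if so, one would instead bound $f_C(x)$ above by $t(x)$ by constructing, for each $\varepsilon$, a functional in $\Cesex$ witnessing values near $t(x)$ along the block averages $\Te_{\theta,n}$, and bound it below by checking $t$ is dominated by every member of $\Cesex$. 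The reduction-plus-Theorem \ref{THM4DENS} approach is cleaner and I would pursue it.
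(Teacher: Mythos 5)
Your proposal is correct and follows essentially the same route as the paper's own proof: reduce to $\intrv01^{\NN}$ via the homogeneity and translation identities for $t$ and $f_C$, pass to a $\{0,1\}$-valued sequence by Lemma \ref{LMSLAVO2}, and conclude from Remark \ref{REMDENSFUN} together with the equality $\uud=\udp$ in Theorem \ref{THM4DENS}. The paper indeed intends exactly this reduction argument rather than a self-contained functional-analytic one.
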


\begin{proof}
We know from Theorem \ref{THM4DENS} that $t(\chi_A)=f_C(\chi_A)$ for any $A\subseteq\NN$. This yields $t(\tilde x)=f_C(\tilde x)$ for any $\tilde x\in\{0,1\}^{\NN}$ (see Remark \ref{REMDENSFUN}).

Now from Lemma \ref{LMSLAVO2} we have for any $x\in\intrv01^{\NN}$ that there is a sequence $\tilde x \in\{0,1\}^{\NN}$ such that
$$t(x)=t(\tilde x)=f_C(\tilde x)=f_C(x).$$
This proves the claim for any $x\in\intrv01^{\NN}$.

We can extend the validity of the above equality from $x\in\intrv01^{\NN}$ to $x\in\ell_\infty$ using the facts that $\Te_\theta(cx)=c\Te_\theta(x)$ for $c>0$ and $\Te_\theta(x+\ol c)=c+\Te_\theta(x)$ for any constant $c$, together with analogous claims for $f_C(x)$ (see Lemma \ref{LMFC}).
\end{proof}

\section{Extreme points}\label{SECTEXTPOINTS}

We have found the maximal value of $f(x)$ for functionals $f\in\Cesex$.
We will use Proposition \ref{PROPJERISON} to obtain some sets of functionals such that their closed convex hull is $\Cesex$.
This gives, in a sense, a description of the whole space $\Cesex$ using some simpler functionals. Namely we can obtain all functionals by taking the convex hull of this set and then the closure in the weak${}^*$-topology. (Closure in the weak${}^*$-topology means taking all limits of nets of functionals from this set, which converge pointwise.)

We could obtain some similar sets of functionals with this property from the results we have already shown so far, see Remark \ref{REMFORFREE}. The main purpose of this section is to obtain a smaller and simpler set which generates all functionals extending Ces\`{a}ro mean.
Using the correspondence between measures and functionals this yields a set of density measures, which generate all density measures, see Corollary \ref{CORCONVHULL}.

In the proof of Theorem \ref{THMSLAVOSUPFG}, which is one of the main results of this section, we will be working with some fixed sequence $x=\seq xn \in \intrv01^{\NN}$. It suffices to show this result for sequences with values in the interval $\intrv01$, since the result can be easily extended to all bounded sequences by scaling and adding constant sequence.

We can use $\Te_{\theta,n}$ from the definition of the function $t(x)$ not only for integers, but for positive real numbers, too.
For any real number $r>0$ and $\theta\in(0,1)$ we define:
\begin{align*}
\Sui{\theta}r(x)&=\sum_{i\in\NN} x_i\chi_{\intrvr{\theta r}r}(i)\\
\Te_{\theta,r}(x)&=\frac{\Sui{\theta}r(x)}{r(1-\theta)}\\
\Te_\theta(x)&=\limsup_{r\to\infty} \Te_{\theta,r}(x)\\
t(x)&=\lim\limits_{\theta\to1^-}\Te_\theta(x)
\end{align*}
We will often write just $\Sui{\theta}r$, $\Te_{\theta,r}$ and $\Te_\theta$ instead.

The resulting function $t(x)$ will be the same in both cases, whether we use integers or real numbers. (We will need to use both possibilities.)
To see that this is indeed the case, notice that if $\abs{r-r'}<1$ (e.g., if $r'=\dcc r$ or $r'=\hcc r$) then also $\abs{\theta r-\theta r'}<1$ and consequently
$$\abs{\Sui{\theta}r-\Sui{\theta}{r'}}\le 2.$$
Hence $$\limti r \left(\frac{\Sui{\theta}r}r-\frac{\Sui{\theta}{r'}}{r'}\right)=0$$
holds for any $\theta\in(0,1)$.

We will use the notion of the limit of a real sequence along an ultrafilter.
\begin{DEF}
If $\seq xn$ is a bounded real sequence and $\FF$ is an ultrafilter on $\NN$, then there exists a unique real number $L$ such that
$$\{n\in\NN; \abs{x_n-L}<\ve\}\in\FF$$
for each $\ve>0$. The number $L$ is called the $\FF$-limit of the sequence $\seq xn$ and it is denoted by $L=\Flim x_n$.
\end{DEF}
The most important property of $\FF$-limit is that it exists for every bounded sequence. For some other useful facts about $\FF$-limits we refer the reader to
\cite{ALEKSGLEBGORD}, \cite[8.23--8.26]{BALSTE}, \cite[Sections 2.3 and 4.5]{DIXMIERGT}, \cite[Section 11.2]{HRJECH}, \cite[Problem 17.19]{KOMJTOT}, \cite{KMSS}.

The symbols $\beta\NN$ and $\beta\NN^*$ denote the set of all ultrafilters and the set of all free ultrafilters on $\NN$, respectively.

\begin{THM}\label{THMSLAVOSUPFG}
For any sequence $\theta_k\in(0,1)$ such that $\theta_k\to1^-$, any $x\in\ell_\infty$ and any free ultrafilter $\FF$ we have
$$t(x)=\sup_{\GG\in\beta\NN^*} \Flim_k \Glim_n \Te_{\theta_k,n}(x).$$
\end{THM}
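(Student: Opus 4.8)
```latex
\textbf{Proof plan.}
The plan is to prove the two inequalities between $t(x)$ and the double-limit supremum separately. By the scaling and shift properties recorded in Remark~\ref{REMTEXISTS} and Lemma~\ref{LMFC} (and noting that $\Flim$ and $\Glim$ are both linear and respect $\Te_{\theta_k,n}(cx)=c\Te_{\theta_k,n}(x)$ and the additive-constant behaviour), both sides transform identically under $x\mapsto cx$ and $x\mapsto x+\ol c$, so it suffices to treat $x\in\intrv01^{\NN}$. Write $R(x)=\sup_{\GG\in\beta\NN^*}\Flim_k\Glim_n \Te_{\theta_k,n}(x)$ for the right-hand side.

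\textbf{The inequality $R(x)\le t(x)$.}
First I would fix any free ultrafilter $\GG$ on $\NN$. For each fixed $k$, the quantity $\Glim_n \Te_{\theta_k,n}(x)$ is an $\GG$-limit of a bounded sequence, and since $\GG$ is free this $\GG$-limit is bounded above by $\limsup_{n\to\infty}\Te_{\theta_k,n}(x)=\Te_{\theta_k}(x)$. Hence $\Flim_k \Glim_n \Te_{\theta_k,n}(x)\le \Flim_k \Te_{\theta_k}(x)$. Because $\theta_k\to 1^-$ and $\Te_\theta(x)\to t(x)$ as $\theta\to1^-$, the sequence $\Te_{\theta_k}(x)$ converges to $t(x)$, so its $\FF$-limit is exactly $t(x)$. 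Taking the supremum over $\GG$ then gives $R(x)\le t(x)$.

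\textbf{The inequality $t(x)\le R(x)$: the main obstacle.}
This is the hard direction, and it requires \emph{constructing} a free ultrafilter $\GG$ witnessing the supremum, together with a matching choice of indices so that the double limit recovers $t(x)$. The idea is that $t(x)=\lim_k\Te_{\theta_k}(x)$ and each $\Te_{\theta_k}(x)=\limsup_{n\to\infty}\Te_{\theta_k,n}(x)$ is realized along some increasing sequence of $n$'s. The difficulty is that the optimal $n$-values depend on $k$, so a single ultrafilter $\GG$ must simultaneously nearly realize the $\limsup$ for \emph{every} (or $\FF$-almost every) $k$. I would handle this by choosing, for each $k$ and each $m$, an index $n_{k,m}$ with $\Te_{\theta_k,n_{k,m}}(x)>\Te_{\theta_k}(x)-\tfrac1m$ and $n_{k,m}\to\infty$, and then building a filter base from the sets $\{n_{k,m};\ m\ge M\}$ (a ``diagonal'' family) whose members have infinite intersections with the relevant tails; extending this to a free ultrafilter $\GG$ forces $\Glim_n \Te_{\theta_k,n}(x)\ge \Te_{\theta_k}(x)$ for the relevant $k$. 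One then checks that $\Flim_k \Glim_n \Te_{\theta_k,n}(x)\ge \Flim_k \Te_{\theta_k}(x)=t(x)$, so $R(x)\ge t(x)$.

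The delicate point I expect to be genuinely technical is guaranteeing that a \emph{single} ultrafilter $\GG$ can force $\Glim_n\Te_{\theta_k,n}(x)$ close to $\Te_{\theta_k}(x)$ for an $\FF$-large set of $k$'s at once; one cannot in general achieve equality for all $k$ simultaneously. I would resolve this either by an $\FF$-based selection that only needs the inequality to hold $\FF$-almost everywhere, or by a diagonal argument producing, for each $\varepsilon>0$, an ultrafilter good to within $\varepsilon$ and then using the supremum over $\GG$ to remove $\varepsilon$. Verifying that the constructed family has the finite intersection property and that the resulting $\GG$ is free (so that the $\Glim$ is controlled by the $\limsup$, matching the upper bound above) is the crux of the argument; the rest is the routine reduction to $\intrv01^{\NN}$ and bookkeeping with $\FF$-limits.
```
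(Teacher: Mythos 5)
Your reduction to $x\in\intrv01^{\NN}$ and your proof of the inequality $\sup_{\GG}\Flim_k\Glim_n\Te_{\theta_k,n}(x)\le t(x)$ are both correct (monotonicity of free-ultrafilter limits plus $\Flim_k\Te_{\theta_k}(x)=\lim_k\Te_{\theta_k}(x)=t(x)$), and this is also how that direction follows in the paper. The gap is in the hard direction. Your ``diagonal'' filter base consists of the sets $\{n_{k,m};\ m\ge M\}$, where $n_{k,m}$ nearly realizes $\limsup_{n\to\infty}\Te_{\theta_k,n}(x)$. Taken across different values of $k$, these sets need not have the finite intersection property: the subsequences of $n$'s along which $\Te_{\theta_{k_1},n}(x)$ and $\Te_{\theta_{k_2},n}(x)$ approach their respective limsups can be pairwise disjoint, so no single free ultrafilter need contain witness sets for two different $k$'s, let alone for $\FF$-many. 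Forcing $\Glim_n\Te_{\theta_k,n}(x)\ge\Te_{\theta_k}(x)$ for $\FF$-many $k$ with one $\GG$ is precisely what cannot be taken for granted; with a separate ultrafilter $\GG_k$ for each $k$ the claim is easy, and the paper notes in Remark \ref{REMFORFREE} that this weaker statement comes essentially for free. You correctly flag this as the crux, but your two suggested resolutions (get the inequality $\FF$-almost everywhere; get an ultrafilter good to within $\ve$) are restatements of the goal with no construction behind them.

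The paper's argument shows what is actually needed. Fixing $L<H<t(x)$, it selects a sparse infinite set $A=\{n_1<n_2<\dots\}$ with $n_k/n_{k+1}<\thenul$ on which $\Te_{\thenul,n}\ge H$, and studies the continuous function $\vp(\theta)=\liminf_{n\in A}\Te_{\theta,n}$. If $\vp\ge H$ on all of $\intrvl{\thenul}1$, any free $\GG\ni A$ works. Otherwise it takes $M=\vp(\theo)\in(L,H)$, locates the first point $\thed$ where $\vp$ descends to the level $M$, extracts $A'\subseteq A$ along which $\Te_{\thed,n}\to M$, and uses the decomposition $\intrvr{\theta\thed n}{n}=\intrvr{\theta\thed n}{\thed n}\cup\intrvr{\thed n}{n}$ to show that after rescaling the evaluation points from $n\in A'$ to $\dcc{\thed n}$ one gets $\liminf_{n\in\dcc{B}}\Te_{\theta,n}\ge M>L$ simultaneously for all $\theta\in\intrvl{\thenul/\thed}1$; a free ultrafilter containing $\dcc{B}$ then works. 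This rescaling step --- trading the unattainable goal of realizing each $\limsup$ for the attainable one of staying above an arbitrary $L<t(x)$ for all $\theta$ near $1$ along one fixed set of indices --- is the idea missing from your proposal.
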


\begin{proof}
Fix a sequence $x=(x_k)\in \intrv01^{\NN}$.

Choose any $L<H<t(x)$.

By the definition of $t(x)$ there exists an $\thenul\in(0,1)$ such that
$$\Te_\theta=\limsupti r \Te_{\theta,r}>H$$
for each $\theta\ge\thenul.$

This implies that the set $$\{n\in\NN; \Te_{\thenul,n} \ge H\}$$
is infinite.

Inductively we can choose an infinite subset $A=\{n_1<n_2<\dots\}$ such that $\frac{n_k}{n_{k+1}}<\thenul$.
We have
$$n\in A \qquad \Ra \qquad \Te_{\thenul,n}\ge H.$$

Using this set we can define a function $\Zobr\vp{\intrvl{\thenul}1}{\intrv01}$ by
$$\vp(\theta)=\liminf_{n\in A} \Te_{\theta,n}$$

This function is continuous. (The proof is postponed to Lemma \ref{LMVPCONT}.)

\underline{Case 1.} First we suppose that $(\forall \theta\in\intrvl{\thenul}1) (\vp(\theta)\ge H)$.
This means that
$$\liminf_{n\in A} \Te_{\theta,n}\ge H.$$ For every such $\theta$ and for any free ultrafilter $\GG$ containing the infinite set $A$ we get $$\Glim_n \Te_{\theta,n}\ge H\ge L.$$

\underline{Case 2.} If $\vp$ does not fulfill the above property, then there exists $\theo>\thenul$ such that $L<\vp(\theo)<H$. Let us denote $M=\vp(\theo)$ and
$$\thed=\inf\{\theta\in\intrvl{\thenul}1; \vp(\theta)\le M\}.$$
We have
$$\vp(\thed)=M$$
(by the continuity of $\vp$) and the definition of $M$ implies
$$\vp(\theta)>M$$
whenever $\thenul\le\theta<\thed$. (See Figure \ref{FIGFUNKCIA}.)

\begin{figure}
\begin{center}
\includegraphics{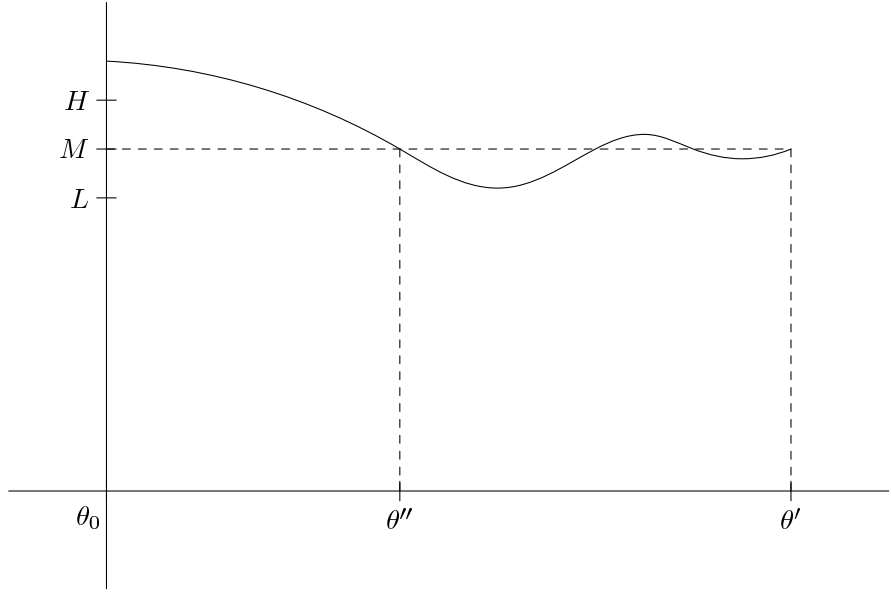}
\end{center}
\caption{Illustration of $\theo$ and $\thed$ from the proof of Theorem \ref{THMSLAVOSUPFG}}\label{FIGFUNKCIA}
\end{figure}

Since $$M=\vp(\thed)=\liminf_{n\in A}\Te_{\thed,n},$$
there exists an infinite set $A'\subseteq A$ such that
\begin{gather*}
\lim_{n\in A'} \Te_{\thed,n} =\lim_{n\in A'}\frac{\Sui{\thed}n}{n(1-\thed)}=M\\
\intertext{or equivalently}
\lim_{n\in A'}\frac{\Sui{\thed}n}{n}=M(1-\thed)
\end{gather*}

For every $n\in A'$ and $\theta$ such that $\frac{\thenul}{\thed}\le\theta<1$
we get
\begin{gather}
\notag \Sui{\theta\thed}n=\Sui{\theta}{\thed{n}}+\Sui{\thed}n,\\
\frac{\Sui{\theta\thed}n}n=\frac{\Sui{\theta}{\thed{n}}}n+\frac{\Sui{\thed}n}n, \label{EQZLOM}
\end{gather}
(since $\intrvl{\theta\thed n}n = \intrvl{\theta\thed n}{\thed n} \cup \intrvl{\thed n}n$; see Figure \ref{FIGINTERVALY}.)

\begin{figure}
\begin{center}
\includegraphics{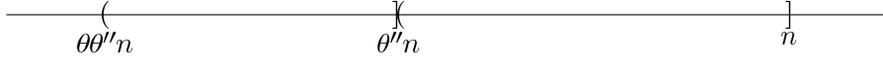}
\end{center}
\caption{Division of the interval $\intrvr{\theta\thed n}n$}\label{FIGINTERVALY}
\end{figure}

The condition $\thenul \le \theta\thed < \thed$ implies $\vp(\theta\thed)\ge M$, i.e.,
\begin{align*}
\liminf_{n\in A'} \Te_{\theta\thed,n}  &\ge \liminf_{n\in A} \Te_{\theta\thed,n} \ge M\\
\liminf_{n\in A'} \frac{\Sui{\theta\thed}n}{n} &\ge M(1-\theta\thed)
\end{align*}

From \eqref{EQZLOM}
we get $\frac{\Sui{\theta}{\thed{n}}}n = \frac{\Sui{\theta\thed}n}n-\frac{\Sui{\thed}n}n$ and thus
\begin{gather*}
\liminf_{n\in A'} \frac{\Sui{\theta}{\thed{n}}}n = \liminf_{n\in A'} \frac{\Sui{\theta\thed}n}n-\lim_{n\in A'} \frac{\Sui{\thed}n}n \ge
M(1-\theta\thed)-M(1-\thed)=M\thed(1-\theta)\\
\liminf_{n\in A'} \frac{\Sui{\theta}{\thed{n}}}{\thed n(1-\theta)} \ge M
\end{gather*}

Now denote $$B=\{\thed a; a\in A'\}.$$ (Note that $B\subseteq\RR$.)

Since $B$ is precisely the set of all real numbers of the form $r=\thed n$ for $n\in A'$, this is equivalent to
\begin{align*}
\liminf_{r\in B} \frac{\Sui{\theta}r}{r(1-\theta)} &\ge M,\\
\liminf_{r\in B} \Te_{\theta,r} &\ge M.
\end{align*}

Now put $$\dcc B=\{\dcc b; b\in B\}.$$ For this set we have
$$\liminf_{n\in \dcc B} \Te_{\theta,n} \ge M.$$
(We have chosen the set $A$ in such way that for any two elements $m_1<m_2$ in $A$ we have $\frac{m_1}{m_2}<\thenul$. This implies that $\thed m_1 < m_1 < \thenul m_2 <\thed m_2$. Hence for any two different elements $m_1, m_2\in A'$, the corresponding elements $\dcc{\thed m_1}$ and $\dcc{\thed m_2}$ of $B$ will be different.)

This implies that for every free ultrafilter $\GG\ni\dcc{B}$ the inequalities
$$\Glim \Te_{\theta,n} \ge M \ge L$$
hold for each $\theta\in\intrvl{\frac{\thenul}{\thed}}1$.

\underline{Conclusion.} In both cases we have shown that there exists a free ultrafilter $\GG$ such that
$$\Glim \Te_{\theta,n} \ge L$$
for each $\theta$ close enough to $1$. Thus for any choice of $\seq\theta{k}$ such that $\theta_k\to1^-$ and any free ultrafilter $\FF$
$$\Flim_k \Glim_n \Te_{\theta_k,n} \ge L.$$
Since $L$ can be chosen arbitrarily close to $t(x)$, we get
$$\Flim_k \Glim_n \Te_{\theta_k,n} \ge t(x).$$
\end{proof}

\begin{LM}\label{LMVPCONT}
The function $\vp(\theta)=\liminf\limits_{n\in A} \Te_{\theta,n}$ is continuous for any infinite set $A\subseteq\NN$.
\end{LM}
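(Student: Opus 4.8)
The plan is to prove continuity by showing that the family $(\Te_{\theta,n})_{n\in A}$, viewed as functions of $\theta$, is equicontinuous for all sufficiently large $n$, and then to use that a $\liminf$ of such a family inherits the same modulus of continuity. Since continuity is a local property and $\intrvl{\thenul}1=\bigcup_{\theta_1<1}\intrv{\thenul}{\theta_1}$, it suffices to fix $\theta_1\in(\thenul,1)$ and establish the estimate uniformly on the compact subinterval $\intrv{\thenul}{\theta_1}$, where $1-\theta\ge c:=1-\theta_1>0$.

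First I would introduce the auxiliary quantity $g_n(\theta)=\frac{\Sui\theta n}n=(1-\theta)\Te_{\theta,n}$, the unnormalized window sum. The only two inputs needed are: (a) for $\theta\le\theta'$ one has $0\le g_n(\theta)-g_n(\theta')=\frac1n\sum_{\theta n<i\le\theta' n}x_i\le(\theta'-\theta)+\frac1n$, because $x_i\in\intrv01$ and the window $\intrvr{\theta n}{\theta' n}$ contains at most $n(\theta'-\theta)+1$ integers; and (b) $0\le g_n(\theta)\le(1-\theta)+\frac1n\le2$. Writing $\Te_{\theta,n}=\frac{g_n(\theta)}{1-\theta}$ and splitting
$$\Te_{\theta,n}-\Te_{\theta',n}=\frac{g_n(\theta)-g_n(\theta')}{1-\theta}+g_n(\theta')\Bigl(\frac1{1-\theta}-\frac1{1-\theta'}\Bigr),$$
these two facts give, for all $\theta,\theta'\in\intrv{\thenul}{\theta_1}$,
$$\abs{\Te_{\theta,n}-\Te_{\theta',n}}\le\frac{\abs{\theta'-\theta}}{c}+\frac1{nc}+\frac{2\abs{\theta'-\theta}}{c^2}.$$

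To conclude, given $\ve>0$ I would first choose $N$ with $\frac1{Nc}<\frac\ve2$ and then $\delta>0$ with $\frac\delta c+\frac{2\delta}{c^2}<\frac\ve2$, so that $\abs{\Te_{\theta,n}-\Te_{\theta',n}}<\ve$ whenever $n\ge N$ and $\abs{\theta-\theta'}<\delta$. Finally I would use that the $\liminf$ depends only on the tail: for $n\ge N$ we have $\Te_{\theta,n}\le\Te_{\theta',n}+\ve$, hence $\vp(\theta)=\liminf_{n\in A}\Te_{\theta,n}=\liminf_{\substack{n\in A\\ n\ge N}}\Te_{\theta,n}\le\liminf_{\substack{n\in A\\ n\ge N}}\Te_{\theta',n}+\ve=\vp(\theta')+\ve$, and symmetrically, so $\abs{\vp(\theta)-\vp(\theta')}\le\ve$. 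This yields uniform continuity of $\vp$ on each $\intrv{\thenul}{\theta_1}$, hence continuity on $\intrvl{\thenul}1$. The only mildly delicate point is the bookkeeping of the discreteness term $\frac1n$ and of the varying normalization $1-\theta$; both are harmless because the offending terms carry a factor $\frac1n$ that is absorbed into the tail, while the factor $\frac1{1-\theta}$ stays bounded by $\frac1c$ on the compact subinterval.
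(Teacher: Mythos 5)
Your proof is correct and follows essentially the same route as the paper: both arguments bound $\abs{\Te_{\theta,n}-\Te_{\theta',n}}$ by decomposing the change into the extra window sum $\sum x_i\chi_{\intrvr{\theta n}{\theta' n}}$ (of order $\abs{\theta-\theta'}$ plus an $O(1/n)$ discreteness term) and the change in the normalization $\frac1{1-\theta}$, and then pass the resulting uniform-in-$n$ estimate through the $\liminf$. Your factoring through $g_n(\theta)=(1-\theta)\Te_{\theta,n}$ is just a repackaging of the paper's identity $\Te_{\theta,r}=\Te_{\theta+\delta,r}-\frac{\delta}{1-\theta}\Te_{\theta+\delta,r}+\frac{\sum x_i\chi_{\intrvr{\theta r}{(\theta+\delta)r}}}{r(1-\theta)}$, and your more careful tracking of the $\frac1n$ term is a harmless refinement.
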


\begin{proof}
Note that we have for any $\theta\in(0,1)$, $r>0$ and $0<\delta<1-\theta$.
\begin{gather*}
\Sui{\theta}r=\sum x_i \chi_{\intrvr{(\theta+\delta)r}r} +\sum x_i \chi_{\intrvr{\theta r}{(\theta+\delta)r}}\\
\Sui{\theta}r=\Sui{(\theta+\delta)}r+\sum x_i \chi_{\intrvr{\theta r}{(\theta+\delta)r}}\\
\Te_{\theta,r}=\frac{\Sui{\theta}r}{r(1-\theta)}=\frac{\Sui{(\theta+\delta)}r}{r(1-\theta)}+\frac{\sum x_i \chi_{\intrvr{\theta r}{(\theta+\delta)r}}}{r(1-\theta)}\\
\Te_{\theta,r}=\Te_{\theta+\delta,r}\frac{1-(\theta+\delta)}{1-\theta}+\frac{\sum x_i \chi_{\intrvr{\theta r}{(\theta+\delta)r}}}{r(1-\theta)}\\
\Te_{\theta,r}=\Te_{\theta+\delta,r}-\frac{\delta}{1-\theta}\Te_{(\theta+\delta),r}+\frac{\sum x_i \chi_{\intrvr{\theta r}{(\theta+\delta)r}}}{r(1-\theta)}\\
\absl{\Te_{\theta,r}-\Te_{\theta+\delta,r}}\le \frac{\delta}{1-\theta} \norm{x} + \frac{\delta}{(1-\theta)} \norm{x}
\end{gather*}

From this we get
$$\absl{\liminf_{n\in A}\Te_{\theta+\delta,n}-\liminf_{n\in A}\Te_{\theta,n}}\le \frac{2\delta}{1-\theta} \norm{x},$$
which implies the continuity of $\varphi$.
\end{proof}

\begin{LM}\label{LMFGINCESEX}
Let $\FF,\GG\in\beta\NN^*$, and $\theta_k\to1^-$. Let us define $\Zobr f{\ell_\infty}{\RR}$ by
$$f(x)=\Flim_k \Glim_n \Te_{\theta_k,n}(x).$$
Then $f\in\Cesex$, i.e., $f$ is a positive linear functional on $\ell_\infty$ which extends Ces\`{a}ro mean.
\end{LM}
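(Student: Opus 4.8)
The plan is to verify directly, one by one, the defining properties of membership in $\Cesex$: that $f$ is well defined, linear, positive, bounded, and restricts to $C$ on $\Ces$. Throughout I would invoke only the standard properties of $\FF$-limits of bounded sequences — existence and uniqueness, linearity, monotonicity, the fact that $\Flim a_n=\lim_n a_n$ whenever the ordinary limit exists, and the two-sided estimate $\liminf_n a_n\le\Flim a_n\le\limsup_n a_n$ that holds because $\FF$ is free.

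First I would check that $f(x)$ makes sense for every $x\in\ell_\infty$. For a fixed $\theta_k$ the map $n\mapsto\Te_{\theta_k,n}(x)$ is a bounded sequence, since $\intrvr{\theta_k n}{n}$ contains at most $n$ integers, so $\abs{\Te_{\theta_k,n}(x)}\le\frac{n}{n(1-\theta_k)}\norm x=\frac{\norm x}{1-\theta_k}$; hence $g_k(x):=\Glim_n\Te_{\theta_k,n}(x)$ exists. Because $\GG$ is free and the number of integers in $\intrvr{\theta_k n}{n}$, namely $n-\dcc{\theta_k n}$, is asymptotically $n(1-\theta_k)$, one has $\limsup_n\abs{\Te_{\theta_k,n}(x)}\le\norm x$, and therefore $\abs{g_k(x)}\le\norm x$ \emph{uniformly in} $k$. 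Consequently $k\mapsto g_k(x)$ is bounded, $f(x)=\Flim_k g_k(x)$ exists, and $\abs{f(x)}\le\norm x$; in particular $f\in\ell_\infty^*$ with $\norm f\le1$.

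Next, linearity and positivity descend from the coordinatewise level. Each $x\mapsto\Te_{\theta_k,n}(x)$ is linear and maps nonnegative sequences to nonnegative numbers, and both $\Glim_n$ and $\Flim_k$ are linear and monotone on bounded sequences; applying them in turn shows $f$ is linear and that $x\ge0$ implies $f(x)\ge0$. To fix the norm I would compute $\Te_{\theta_k,n}(\ol 1)=\frac{n-\dcc{\theta_k n}}{n(1-\theta_k)}\to1$ as $n\to\infty$, so $g_k(\ol 1)=1$ for every $k$ and $f(\ol 1)=1$; together with positivity this yields $\norm f=1$.

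The one genuinely computational point is that $f$ extends $C$. For $x\in\Ces$ I would write $S_m=x_1+\dots+x_m$, so that $\Te_{\theta,n}(x)=\frac{S_n-S_{\dcc{\theta n}}}{n(1-\theta)}$, and use $S_m/m\to C(x)$ to substitute $S_n=C(x)\,n+o(n)$ and $S_{\dcc{\theta n}}=C(x)\,\theta n+o(n)$, obtaining $\Te_{\theta,n}(x)\to C(x)$ as $n\to\infty$ for each fixed $\theta$. Since this ordinary limit exists, $\Glim_n\Te_{\theta_k,n}(x)=C(x)$ for every $k$, hence $f(x)=\Flim_k C(x)=C(x)$. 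This finishes the verification that $f\in\Cesex$. I expect no real obstacle: the only two places needing care are the uniform-in-$k$ bound on $g_k(x)$, which genuinely uses the freeness of $\GG$ (through the passage to $\limsup$), and simply keeping the two iterated ultrafilter limits correctly ordered.
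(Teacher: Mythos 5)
Your proof is correct, and on the one substantive point it takes a cleaner route than the paper. The paper dismisses linearity and positivity as clear, and for the extension property it sandwiches $\Glim_n \Te_{\theta_k,n}(x)$ between $\liminf_n$ and $\limsup_n$ and then lets $k\to\infty$, invoking the previously established identities $t(x)=C(x)$ and $-t(-x)=C(x)$ for $x\in\Ces$ (which rest on Proposition \ref{PROPTX} and ultimately on the P\'olya-type results of Section \ref{SECTDENSMEAS}); in that argument the hypothesis $\theta_k\to1^-$ is genuinely used. You instead compute directly that $\Te_{\theta,n}(x)=\bigl(S_n-S_{\dcc{\theta n}}\bigr)/\bigl(n(1-\theta)\bigr)\to C(x)$ as $n\to\infty$ for each \emph{fixed} $\theta\in(0,1)$, so that $\Glim_n \Te_{\theta_k,n}(x)=C(x)$ already for every $k$ and the outer $\FF$-limit is taken of a constant sequence. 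This is more elementary and self-contained: it needs none of the earlier machinery and does not even use $\theta_k\to1^-$ for this step, at the cost of a short $o(n)$ computation (which you carry out correctly, the $O(1)$ discrepancy between $\dcc{\theta n}$ and $\theta n$ being harmless). Your verification of well-definedness, the uniform bound $\abs{g_k(x)}\le\norm{x}$ via $\Glim\le\limsup$ (where the freeness of $\GG$ is indeed essential), and of linearity, positivity and $f(\ol 1)=1$ supplies exactly the details the paper leaves to the reader.
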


\begin{proof}
It is clear
that $f(x)$ defined above is a positive linear functional on $\ell_\infty$.

Now let $x\in\Ces$. Then we have $t(x)=\lim\limits_{\theta\to1^-} \limsup\limits_{n\to\infty} \Te_{\theta,n}(x)=C(x)$ and also
$C(x)=-C(-x)=-t(-x)=\lim\limits_{\theta\to1^-} \liminf\limits_{n\to\infty} \Te_{\theta,n}(x)$. For each $k$ we get
$$\liminf\limits_{n\to\infty} \Te_{\theta_k,n}(x) \le \Glim_n \Te_{\theta_k,n}(x) \le \limsup\limits_{n\to\infty} \Te_{\theta_k,n}(x)$$
which yields
$$C(x)\le \Flim_k \Glim_n \Te_{\theta_k,n}(x) \le C(x).$$
So we have $f(x)=C(x)$.
\end{proof}

\begin{THM}\label{THMCONVHULL}
Let $\theta_k\in(0,1)$ be a sequence such that $\theta_k\to1^-$ and $\FF$ be any free ultrafilter $\FF$. Let us define
$$S=\{f_{\GG}; \GG\in\beta\NN^*\}$$
where $\Zobr {f_{\GG}}{\ell_\infty}{\RR}$ is defined by
$$f_{\GG}(x)=\Flim_k \Glim_n \Te_{\theta_k,n}(x).$$
We consider $S$ as a subset of the space $\ell_\infty^*$ endowed with the weak${}^*$ topology.

Then $$\ccohull{S}=\Cesex,$$
i.e., the closed convex hull of $S$ is the set of all positive functionals on $\ell_\infty$ extending Ces\`{a}ro mean.
Or, equivalently, $\ol{S}$ contains all extreme points of $\Cesex$.
\end{THM}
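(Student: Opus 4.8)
The plan is to apply Proposition \ref{PROPJERISON} with $X=\ell_\infty$, $C=\Cesex$, and $S$ the set defined in the statement. For this to work I must verify three things: that $\Cesex$ is convex and weak${}^*$-compact, that $S\subseteq\Cesex$, and that the supremum condition (i) of Proposition \ref{PROPJERISON} holds, namely $\sup_{f\in S} f(x)=\sup_{f\in\Cesex} f(x)$ for every $x\in\ell_\infty$. Once these are checked, Proposition \ref{PROPJERISON} immediately yields both the equality $\ccohull{S}=\Cesex$ (condition (ii)) and the statement about extreme points (condition (iii)), which are exactly the two conclusions claimed.

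For the first point, convexity of $\Cesex$ is clear since a convex combination of positive functionals extending Ces\`{a}ro mean is again such a functional. Weak${}^*$-compactness follows because $\Cesex$ is a weak${}^*$-closed subset of the unit ball of $\ell_\infty^*$: every $f\in\Cesex$ satisfies $\norm f=1$ (it is positive and $f(\ol 1)=C(\ol 1)=1$), so $\Cesex$ sits inside the ball, which is weak${}^*$-compact by Banach--Alaoglu; and the defining conditions (positivity, linearity, and $f|_{\Ces}=C$) are all preserved under weak${}^*$-limits, so $\Cesex$ is closed. The second point, $S\subseteq\Cesex$, is precisely the content of Lemma \ref{LMFGINCESEX}, which shows that each $f_{\GG}$ is a positive linear functional extending Ces\`{a}ro mean.

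The crux is the third point, the supremum identity. Recall $f_C(x)=\sup\{f(x); f\in\Cesex\}=\sup_{f\in\Cesex} f(x)$ by definition, and by Proposition \ref{PROPTX} we have $f_C(x)=t(x)$. So the right-hand side $\sup_{f\in C} f(x)$ equals $t(x)$. For the left-hand side, I must show $\sup_{f\in S} f(x)=t(x)$. The inequality $\sup_{f\in S} f(x)\le t(x)$ is automatic from $S\subseteq\Cesex$. The reverse inequality $\sup_{f\in S} f(x)\ge t(x)$ is exactly Theorem \ref{THMSLAVOSUPFG}, which states that $t(x)=\sup_{\GG\in\beta\NN^*} \Flim_k \Glim_n \Te_{\theta_k,n}(x)=\sup_{\GG\in\beta\NN^*} f_{\GG}(x)=\sup_{f\in S} f(x)$. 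Thus both conclusions of the theorem follow once this chain of identities is assembled.

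The main obstacle has in fact already been dispatched in the preceding results: the genuinely hard analytic work lies in Theorem \ref{THMSLAVOSUPFG} (the ultrafilter construction achieving the supremum) and in Proposition \ref{PROPTX} (identifying $t$ with $f_C$), both of which I am entitled to assume here. Given those, the present theorem is essentially a bookkeeping exercise: the only genuinely new verifications are the convexity and weak${}^*$-compactness of $\Cesex$, which are routine applications of Banach--Alaoglu together with the stability of the defining conditions under weak${}^*$-limits. I would therefore write the proof as a short invocation of Proposition \ref{PROPJERISON}, citing Lemma \ref{LMFGINCESEX} for $S\subseteq\Cesex$ and the combination of Proposition \ref{PROPTX} and Theorem \ref{THMSLAVOSUPFG} for the supremum condition, and noting that conditions (ii) and (iii) of Proposition \ref{PROPJERISON} give precisely the two equivalent formulations in the statement.
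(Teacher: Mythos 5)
Your proposal is correct and follows the paper's own proof essentially verbatim: weak${}^*$-compactness of $\Cesex$ via Banach--Alaoglu, membership $S\subseteq\Cesex$ via Lemma \ref{LMFGINCESEX}, the supremum identity via Theorem \ref{THMSLAVOSUPFG} (together with Proposition \ref{PROPTX}), and then Proposition \ref{PROPJERISON}. Your version is slightly more explicit than the paper's in citing Proposition \ref{PROPTX} to identify $\sup_{f\in\Cesex}f(x)$ with $t(x)$, but this is the same argument.
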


\begin{proof}
The set $\Cesex$ is a subset of the unit ball of $\ell^*$. It is closed in the weak${}^*$ topology. Thus by Banach-Alaoglu theorem it is compact.

Lemma \ref{LMFGINCESEX} implies that each $\vp\in S$ is indeed an element of $\Cesex$ and by Theorem \ref{THMSLAVOSUPFG} we have
$$\sup_{\vp\in S} \vp(x)=\sup_{\vp\in\Cesex} \vp(x).$$
So from Proposition \ref{PROPJERISON} we get that $\ccohull{S}=\Cesex$ and $\ol S$ contains all extreme points of $\Cesex$.
\end{proof}

Of course, we can work with finitely additive measures instead of the corresponding functionals. The weak${}^*$ topology is then determined by the condition that a net $\mu_\sigma$ of measures converges to $\mu$ if and only if $\mu_\sigma(A)$ converges to $\mu(A)$ for each $A\subseteq\NN$.
\begin{COR}\label{CORCONVHULL}
Let $\theta_k\in(0,1)$ be a sequence such that $\theta_k\to1^-$ and $\FF$ be any free ultrafilter $\FF$. Let us define
$$S=\{\mu_{\GG}; \GG\in\beta\NN^*\}$$
where $\mu_{\GG}$ is defined by
$$\mu_{\GG}(A)=\Flim_k \Glim_n \Te_{\theta_k,n}(\chi_A).$$

Then $$\ccohull{S}=\DM,$$
i.e., the closed convex hull of $S$ is the set of all density measures.
Or, equivalently, $\ol{S}$ contains all extreme points of $\DM$.
\end{COR}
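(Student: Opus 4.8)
The plan is to transfer Theorem \ref{THMCONVHULL} through the correspondence between functionals and measures from Section \ref{SECTMEASFUNCT}. Write $\Phi$ for the map sending a positive functional $f\in\ell_\infty^*$ to the measure $\Phi(f)$ given by $\Phi(f)(A)=f(\chi_A)$. By Theorem \ref{THMDENSCES} this restricts to a bijection $\Phi\colon\Cesex\to\DM$, and comparing the defining formulas for $f_{\GG}$ and $\mu_{\GG}$ shows $\Phi(f_{\GG})=\mu_{\GG}$ for every $\GG\in\beta\NN^*$; hence $\Phi$ carries the set $S$ of functionals in Theorem \ref{THMCONVHULL} onto the set $S$ of measures in the present statement. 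The whole corollary will follow once we know that $\Phi$ is an affine homeomorphism when $\Cesex$ carries the weak${}^*$ topology and $\DM$ carries the topology of pointwise convergence on sets, since such a map sends closed convex hulls to closed convex hulls and extreme points to extreme points.

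Affinity is immediate, since $\Phi(\lambda f+(1-\lambda)g)(A)=\lambda f(\chi_A)+(1-\lambda)g(\chi_A)$, and bijectivity is supplied by Theorem \ref{THMDENSCES}. Continuity of $\Phi$ itself is also clear: if $f_\sigma\to f$ weak${}^*$, then in particular $f_\sigma(\chi_A)\to f(\chi_A)$ for each $A$, which is exactly $\Phi(f_\sigma)(A)\to\Phi(f)(A)$.

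The main obstacle is the continuity of $\inv\Phi$, that is, upgrading pointwise convergence on the characteristic sequences $\chi_A$ to weak${}^*$ convergence on all of $\ell_\infty$. The key fact is uniform boundedness of $\Cesex$: every $f\in\Cesex$ is positive and satisfies $f(\ol1)=C(\ol1)=1$, so $\norm f=1$. Given a net $\mu_\sigma=\Phi(f_\sigma)\to\mu=\Phi(f)$ pointwise on sets, the functionals $f_\sigma$ converge to $f$ on every characteristic sequence, hence by linearity on every step sequence. Since step sequences are dense in $\ell_\infty$ and $\norm{f_\sigma}=\norm f=1$, a standard estimate finishes the argument: approximating an arbitrary $x\in\ell_\infty$ uniformly by a step sequence $s$ and bounding $\abs{f_\sigma(x)-f(x)}\le\abs{f_\sigma(x-s)}+\abs{f_\sigma(s)-f(s)}+\abs{f(s-x)}\le 2\norm{x-s}+\abs{f_\sigma(s)-f(s)}$ gives $f_\sigma(x)\to f(x)$ for every $x$, i.e. $f_\sigma\to f$ weak${}^*$. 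Thus $\Phi$ is a homeomorphism.

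With $\Phi$ an affine homeomorphism, writing $S_{\mathrm{func}}$ for the set of functionals in Theorem \ref{THMCONVHULL}, we obtain $\Phi(\ccohull{S_{\mathrm{func}}})=\ccohull{\Phi(S_{\mathrm{func}})}=\ccohull{S}$. Since that theorem gives $\ccohull{S_{\mathrm{func}}}=\Cesex$ and $\Phi(\Cesex)=\DM$, we conclude $\ccohull{S}=\DM$. The equivalent statement about extreme points is identical in spirit: $\Phi$ maps extreme points of $\Cesex$ bijectively to extreme points of $\DM$ and commutes with closure, so $\ol S=\Phi(\ol{S_{\mathrm{func}}})$ contains all extreme points of $\DM$ precisely because $\ol{S_{\mathrm{func}}}$ contains all extreme points of $\Cesex$.
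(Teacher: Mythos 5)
Your proof is correct and follows exactly the route the paper intends: the paper states Corollary \ref{CORCONVHULL} without proof, treating it as an immediate transfer of Theorem \ref{THMCONVHULL} through the measure--functional correspondence with the topology on $\DM$ described just before the corollary. Your elaboration --- in particular the verification that pointwise convergence on characteristic sequences upgrades to weak${}^*$ convergence via the uniform bound $\norm{f}=1$ and density of step sequences --- supplies precisely the details the paper leaves implicit.
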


\begin{REM}\label{REMFORFREE}
It is easy to show that any functional of the form
$$f(x)=\Flim_k \Gklim_n \Te_{\theta_k,n}(x)$$
for $\FF,\GG_k\in\beta\NN^*$ is also a positive functional extending Ces\`{a}ro mean.

For any given $x$ and $(\theta_k)$ there exists a sequence $\GG_k$ of free ultrafilters such that $\limsup\limits_{n\to\infty} \Te_{\theta_k,n}=\Gklim_n \Te_{\theta_k,n}$.

From this we can immediately see that the set of all functionals of this form has the same properties as the set $S$ from Theorem \ref{THMCONVHULL}. But Theorem \ref{THMCONVHULL} is a stronger result than this, we do not need a sequence of ultrafilters, we have the same ultrafilter $\GG$ for each $k$.
\end{REM}

\bibliographystyle{plain}
\bibliography{martin}

\end{document}